\title[]{Computational Bounds for Doing Harmonic Analysis on \\ Permutation Modules of Finite Groups}
\subjclass[2010]{Primary 43A85, 20C40, 65T50}
\date{October 8, 2019}
\author[]{Michael Hansen}
\address{Department of Mathematics, Harvey Mudd College, Claremont, CA, USA}
\curraddr{}
\email{mhansen@gmail.com}
\thanks{}
\author[]{Masanori Koyama}
\address{Department of Mathematics, Harvey Mudd College, Claremont, CA, USA}
\curraddr{}
\email{koyama.masanori@gmail.com}
\thanks{}
\author[]{Matthew B.~A.~McDermott}
\address{Computer Science \& Artificial Intelligence Laboratory, Massachusetts Institute of Technology, Cambridge, MA, USA}
\curraddr{}
\email{mmd@mit.edu}
\thanks{}
\author[]{Michael E.~Orrison}
\address{Department of Mathematics, Harvey Mudd College, Claremont, CA, USA}
\curraddr{}
\email{orrison@hmc.edu}
\thanks{}
\author[]{Sarah Wolff}
\address{Department of Mathematics and Computer Science, Denison University, Granville, OH, USA}
\curraddr{}
\email{wolffs@denison.edu}
\thanks{}
\begin{document}

\parindent=0in
\parskip=.2in
\maketitle

\newtheorem{theorem}{Theorem}
\newtheorem{lemma}[theorem]{Lemma}
\newtheorem{proposition}[theorem]{Proposition}
\newtheorem{corollary}[theorem]{Corollary}
\newtheorem{definition}[theorem]{Definition} 
\newtheorem{example}[theorem]{Example} 
\newtheorem{note}[theorem]{Note}
\newtheorem*{remark}{Remark}


\begin{abstract}
We develop an approach to finding upper bounds for the number of arithmetic operations necessary for doing harmonic analysis on permutation modules of finite groups. The approach takes advantage of the intrinsic orbital structure of permutation modules, and it uses the multiplicities of irreducible submodules within individual orbital spaces to express the resulting computational bounds. We conclude by showing that these bounds are surprisingly small when dealing with certain permutation modules arising from the action of the symmetric group on tabloids. 
\end{abstract}

\section{Introduction}\label{Section:  Introduction}

Let $G$ be a finite group acting transitively on a finite set $X$.  Let $\mathbb{C}G$ be the complex group algebra of $G$, and let $\mathbb{C}X$ be the vector space of complex-valued functions defined on $X$.  The action of $G$ on $X$ turns $\mathbb{C}X$ into a $\mathbb{C}G$-permutation module, where if  $\alpha = \sum_{g \in G} \alpha(g) g \in \mathbb{C}G$ and $f \in \mathbb{C}X$, then
\[
(\alpha \cdot f)(x) = \sum_{g \in G} \alpha(g) f(g^{-1} x)
\]
 for all $x \in X$.  In this setting, we say that a basis $\mathcal{B}$ for $\mathbb{C}X$ is a \emph{harmonic basis} (with respect to $\mathbb{C}G$) if it can be partitioned into subsets such that each subset forms a basis for an irreducible $\mathbb{C}G$-submodule of $\mathbb{C}X$.

The problem addressed in this paper is the following:  Given an arbitrary $f \in \mathbb{C}X$, how may we efficiently compute the coefficients necessary to express $f$ in terms of a harmonic basis for $\mathbb{C}X$?  In other words, if $X = \{x_1, \dots, x_m \}$, and we are given $f(x_1), \dots, f(x_m)$, then how may we efficiently find the complex coefficients $\beta_1, \dots, \beta_m$ such that $f = \beta_1 b_1 + \cdots + \beta_m b_m$ for some harmonic basis $\mathcal{B} = \{ b_1, \dots, b_m \}$ of $\mathbb{C}X$?

The problem of computing such coefficients arises when doing \emph{harmonic analysis on finite groups}, especially when the function $f \in \mathbb{C}X$ corresponds to complex-valued data defined on a set $X$ with an underlying symmetry group $G$ (see, for example, \cite{ceccherini-silberstein-etal-2008, fassler-and-stiefel-1992, terras-1999}).  It also arises in \emph{generalized spectral analysis}, which was developed by Diaconis \cite{diaconis-1988, diaconis-1989} and which extends the classical spectral analysis of time series to the analysis of functions in $\mathbb{C}X$.  

As an example, let $X = \{ x_1, \dots, x_m \}$ and let $G$ be the cyclic group $\mathbb{Z}/m\mathbb{Z}$ acting on $X$ by cyclically permuting its elements.  The elements of $\mathbb{C}X$ may be viewed as discretized periodic \emph{signals} on $m$ points, in which case the irreducible submodules of $\mathbb{C}X$ correspond to different \emph{frequencies} in the usual signal processing sense.  In this case, the coefficients $\beta_1, \dots, \beta_m$ for $f \in \mathbb{C}X$ are then just fixed scalar multiples of the usual Fourier coefficients of $f$, which may be found by applying the usual discrete Fourier transform (DFT) to $f$.  The classical fast Fourier transform (FFT) may therefore be used to efficiently compute the $\beta_i$ using $O ( m \log m)$ arithmetic operations (see, for example, \cite{clausen-and-baum-1993, van-loan-1992}).  

As another example, suppose respondents in a survey are asked to choose their top $k$ favorite items from a set of $n$ items, where $k \le n/2$.  In this case, the set $X$ is the set of all $k$-element subsets of the items, and the group $G$ is the symmetric group $S_n$, whose natural action on the items induces an action on the set $X$.  If $f \in \mathbb{C}X$ is the function defined by setting $f(x)$ to be the number of people who choose the $k$-element set $x$, then the irreducible $\mathbb{C}S_n$-modules in $\mathbb{C}X$ correspond to summary statistics about $j$-element subsets of the items where $0 \le j \le k$.  As such, the associated $\beta_i$ for such survey data can be used to uncover hidden relationships among the $n$ items being ranked (see, for example, \cite{crisman-orrison-2017, diaconis-1988, diaconis-1989, marden-1995}).  

The techniques and insights for the analysis of such ``top $k$" survey data extend to the situation in which survey respondents are asked to choose and rank their top $k$ favorite items from the set of $n$ items. The efficient harmonic analysis of such \emph{partially ranked data} is addressed later in this paper (see Section~\ref{Section:  The Symmetric Group Acting on Tabloids}). Along the way, we also develop useful results concerning the harmonic analysis of finite-dimensional permutation modules in general, and we recover a well-known result due to Clausen \cite{clausen-1989} concerning a bound for the number of arithmetic operations necessary to apply generalized discrete Fourier transforms of finite groups (see Section~\ref{Section:  General Upper Bounds}).

The overarching approach that we take in this paper is relatively simple. Given a chain $\{1\} = G_1 < \cdots < G_n = G$ of subgroups of $G$, we will associate to each subgroup $G_j$ a special kind of harmonic basis $\mathcal{B}_j$ of $\mathbb{C}X$, where $\mathcal{B}_1$ is the standard basis of $\mathbb{C}X$. In particular, these harmonic bases will be \emph{symmetry adapted} (see Section~\ref{Section:  Symmetry Adapted Bases}). We will then bound the number of arithmetic operations necessary to do a change of basis from $\mathcal{B}_1$ to $\mathcal{B}_2$, then from $\mathcal{B}_2$ to $\mathcal{B}_3$, and so on, until we reach the change of basis from $\mathcal{B}_{n-1}$ to $\mathcal{B}_n$ (see Section~\ref{Section:  Bounds Based on the Dimensions of Frequency Spaces}).  Combining these results leads to an overall bound for the number of arithmetic operations necessary to do a change of basis from $\mathcal{B}_1$ to $\mathcal{B}_n$. As we will show, in some cases, this bound is surprisingly small.  

Using adapted bases to create fast algorithms for doing harmonic analysis is not new (see, for example, \cite{burgisser-etal-1997, clausen-and-baum-1993, maslen-rockmore-wolff-2018, maslen-rockmore-1997}). However, most books and papers on the subject focus primarily on the regular representation of a group (i.e., when $X = G$, and the action of $G$ on $X$ is simply given by group multiplication). In this paper, we show that it can also be fruitful to use adapted bases when dealing with other permutation representations. In particular, by using adapted bases that respect the intrinsic orbital structure of a permutation representation, we are able to provide straightforward but nontrivial bounds expressed in terms of the multiplicities of the irreducible representations that arise when we restrict the action of $G$ to subgroups in the chain $\{1\} = G_1 < \cdots < G_n = G$ (see Section~\ref{Section:  General Upper Bounds}). 

Although we hope that our results appeal to a wide-ranging audience, for both convenience and space considerations, we will assume throughout the rest of the paper that the reader has a working knowledge of the basic representation theory of the symmetric group, and that the reader is familiar with the representation theory of finite groups in general.  See, for example, the books by Sagan \cite{sagan-2001} and Serre \cite{serre-1977}.


\section{Background and Lemmas}\label{Section:  Background and Lemmas}  

In this section, we explain some of the terminology and notation we will be using throughout the rest of the paper. We also introduce a couple of foundational lemmas that will be used in the next section when we discuss general upper bounds.  Good references for the ideas addressed in this section are \cite{ceccherini-silberstein-etal-2008, clausen-and-baum-1993, fassler-and-stiefel-1992, terras-1999}.

\subsection{Functions on $X$}

Let $X = \{ x_1, \dots, x_m \}$ be a finite set.  We denote the complex vector space of complex-valued functions defined on $X$ by $\mathbb{C}X$.  For convenience, we will identify $x \in X$ with the function that is 1 on $x$ and $0$ on all of the other elements.  The $x_i$ then form a basis for $\mathbb{C}X$, which we call the \emph{standard basis} for $\mathbb{C}X$ and denote by $\mathcal{B}_1$.  

If $f = f(x_1) x_1 + \cdots + f(x_m) x_m$, then the coordinate vector of $f$ with respect to $\mathcal{B}_1$ is the column vector 
\[
[f]_{\mathcal{B}_1} = 
\begin{bmatrix}
f(x_1) \\ \vdots \\ f(x_m)
\end{bmatrix}. 
\]
More generally, if $\mathcal{B} = \{ b_1, \dots, b_m \}$ is a basis for $\mathbb{C}X$, and $f = \beta_1 b_1 + \cdots + \beta_m b_m$ for some complex coefficients $\beta_1, \dots, \beta_m$, then the coordinate vector of $f$ with respect to $\mathcal{B}$ is 
\[
[f]_\mathcal{B} = 
\begin{bmatrix}
\beta_1 \\ \vdots \\ \beta_m
\end{bmatrix}. 
\]

\subsection{Permutation Modules}

If $G$ is a finite group acting on the left on a finite set $X = \{ x_1, \dots, x_m \}$, then $G$ acts naturally on $\mathbb{C}X$, where if $g \in G$ and $f \in \mathbb{C}X$, then 
\[
(g \cdot f) (x) = f ( g^{-1} x)
\]
for all $x \in X$.  Linearly extending this action to $\mathbb{C}G$ then turns $\mathbb{C}X$ into a $\mathbb{C}G$-\emph{permutation module}.  The action of $G$ on $\mathbb{C}X$ gives rise to a \emph{permutation representation} 
\[
\varphi : G \to GL_{|X|}(\mathbb{C})
\]
where $\varphi(g)$ is the permutation matrix that encodes the action of $g \in G$ on $X$ with respect to the standard basis $\mathcal{B}_1$:  
\[
[\varphi(g)]_{ij} =
\begin{cases}
1 & \textnormal{if $g x_j = x_i$ } \\
0 & \textnormal{otherwise}.  
\end{cases}
\]
In particular, for all $g \in G$ and for all $f \in \mathbb{C}X$, 
\[
[g \cdot f]_{\mathcal{B}_1} = \varphi(g) [f]_{\mathcal{B}_1}. 
\]

In general, if $\mathcal{B}$ is a basis for $\mathbb{C}X$, then we will denote the matrix encoding of the action of $g \in G$ with respect to $\mathcal{B}$ by $[g]_\mathcal{B}$, which is the unique matrix with the property that for all $f \in \mathbb{C}X$, 
\[
[g]_\mathcal{B} [f]_\mathcal{B} = [g \cdot f]_\mathcal{B}.
\]
In this case, note that $\varphi(g) = [g]_{\mathcal{B}_1}$, and that the map $g \mapsto [g]_\mathcal{B}$ creates a representation of $G$ that is equivalent to $\varphi$ but uses the basis $\mathcal{B}$ instead of $\mathcal{B}_1$.

\subsection{Symmetry Adapted Bases}\label{Section:  Symmetry Adapted Bases}

Let $G$ be a finite group, and let $\mathcal{R}(G)$ denote a fixed maximal set of pairwise inequivalent irreducible complex representations of $G$. Let $M$ be a finite-dimensional $\mathbb{C}G$-module with basis $\mathcal{B}$. Recall that $\mathcal{B}$ is a \emph{harmonic basis} if it can be partitioned into subsets such that each subset forms a basis for an irreducible $\mathbb{C}G$-submodule of $M$.  We say $\mathcal{B}$ is a \emph{symmetry adapted basis of $M$ with respect to $\mathcal{R}(G)$} if for all $g \in G$, $[g]_\mathcal{B}$ is block diagonal, and each block of $[g]_\mathcal{B}$ is of the form $\rho(g)$ for some $\rho \in \mathcal{R}(G)$. A symmetry adapted basis is therefore a special kind of harmonic basis that reflects the action of $G$ as encoded in the representations in $\mathcal{R}(G)$.

Suppose now that $\{1\} = G_1 < \cdots < G_n = G$ is chain of subgroups of $G$. It is possible to find $\mathcal{R}(G_1), \dots, \mathcal{R}(G_n)$ and a basis $\mathcal{B}$ of $M$ such that for all $1 \le i \le n$, when $M$ is viewed as a $\mathbb{C}G_i$-module (by restricting the action of $G$ to $G_i$), $\mathcal{B}$ is simultaneously an adapted basis with respect to $\mathcal{R}(G_1), \dots, \mathcal{R}(G_n)$. In this case, we say that the $\mathcal{R}(G_i)$ are \emph{compatible} with respect to $M$, and that $\mathcal{B}$ is a \emph{symmetry adapted basis with respect to the list $\mathcal{R}(G_1), \dots, \mathcal{R}(G_n)$}.

In what follows, we will make use of harmonic bases $\mathcal{B}_1, \dots, \mathcal{B}_n$ where $\mathcal{B}_j$ is a symmetry adapted basis with respect to the list $\mathcal{R}(G_1), \dots, \mathcal{R}(G_j)$. The goal will be to show that the change of basis from $\mathcal{B}_1$ to $\mathcal{B}_n$ can sometimes be computed efficiently by computing a change of basis from $\mathcal{B}_1$ to $\mathcal{B}_2$, then from $\mathcal{B}_2$ to $\mathcal{B}_3$, then from $\mathcal{B}_3$ to $\mathcal{B}_4$, and so on until we reach $\mathcal{B}_n$. 

Since we are dealing with permutation representations in this paper, we will take advantage of the orbit structure that is inherently present. To explain, suppose the action of $G$ on $X$ partitions $X$ into orbits $X_1, \dots, X_t$. With a slight abuse of notation, we may view each $\mathbb{C}X_i$ as a submodule of $\mathbb{C}X$, in which case we may write $\mathbb{C}X$ as the direct sum
\[
\mathbb{C}X = \mathbb{C}X_1 \oplus \dots \oplus \mathbb{C}X_t.  
\]
An \emph{orbital harmonic basis} for $\mathbb{C}X$ is then a basis for $\mathbb{C}X$ that can be partitioned into subsets that form harmonic bases for the $\mathbb{C}X_i$. In what follows, we will assume that we are always dealing with orbital harmonic bases when we work with harmonic bases for permutation modules. As we will see, this will sometimes lead to a fast algorithm for doing harmonic analysis on a permutation module.

As a simple but helpful example, consider the situation where $G = S_3$ acts on the set $X = \{ 1, 2, 3 \}$ in the usual way. Expressing the vectors in $\mathbb{C}X$ using the coordinate vectors of the standard basis, we can write $\mathbb{C}X$ as a direct sum of irreducible $\mathbb{C}S_1$-, $\mathbb{C}S_2$-, and $\mathbb{C}S_3$-modules, respectively, as
\begin{align*}
\mathbb{C}X &= 
\langle \begin{bmatrix}1 \\ 0 \\ 0 \end{bmatrix} \rangle \oplus \langle \begin{bmatrix}0 \\ 1 \\ 0 \end{bmatrix} \rangle \oplus \langle \begin{bmatrix}0 \\ 0\\ 1 \end{bmatrix} \rangle
\\ 
&= \langle \begin{bmatrix}1 \\ 1 \\ 0 \end{bmatrix} \rangle \oplus \langle \begin{bmatrix}1 \\ -1 \\ 0 \end{bmatrix} \rangle \oplus \langle \begin{bmatrix}0 \\ 0 \\ 1 \end{bmatrix} \rangle
\\
&= \langle \begin{bmatrix}1 \\ 1 \\ 1 \end{bmatrix} \rangle \oplus \langle \begin{bmatrix}1 \\ -1 \\ 0 \end{bmatrix} , \begin{bmatrix}1/2 \\ 1/2 \\ -1 \end{bmatrix} \rangle. 
\end{align*}
The vectors appearing in each line above form orbital harmonic bases $\mathcal{B}_1, \mathcal{B}_2$, and $\mathcal{B}_3$ of $\mathbb{C}X$ with respect to the compatible \emph{seminormal representations} (see, for example, \cite{james-and-kerber-1981}) of $S_1$, $S_2$, and $S_3$, respectively. The change of basis matrix from $\mathcal{B}_1$ to $\mathcal{B}_2$ is 
\[
\begin{bmatrix}
1/2 &  1/2 &  0 \\
1/2 & -1/2 & 0 \\
0 & 0 & 1 
\end{bmatrix}
\]
and the change of basis matrix from $\mathcal{B}_2$ to $\mathcal{B}_3$ is 
\[
\begin{bmatrix}
2/3 &  0 &  1/3 \\
0 & 1 & 0 \\
2/3 & 0 & -2/3 
\end{bmatrix}.
\]
The change of basis matrix from $\mathcal{B}_1$ to $\mathcal{B}_3$ is therefore the product
\[
\begin{bmatrix}
2/3 &  0 &  1/3 \\
0 & 1 & 0 \\
2/3 & 0 & -2/3 
\end{bmatrix}
\begin{bmatrix}
1/2 &  1/2 &  0 \\
1/2 & -1/2 & 0 \\
0 & 0 & 1 
\end{bmatrix}
=
\begin{bmatrix}
1/3 &  1/3 &  1/3 \\
1/2 & -1/2 &  0 \\
1/3 &  1/3 & -2/3 
\end{bmatrix}.
\]

\subsection{Frequency Spaces}

Let $\rho \in \mathcal{R}(G)$, and suppose $\rho$ has degree $d$. For each $i$ such that $1 \le i \le d$, there is a primitive idempotent $e = \sum_{g \in G} \epsilon(g) g \in \mathbb{C}G$ associated to $\rho$ such that 
\begin{enumerate}
    \item $\sum_{g \in G} \epsilon(g) \rho(g)$ is a $d \times d$ matrix filled with zeros except for a 1 in the $ii$-th position, and 
    \item $\sum_{g \in G} \epsilon(g) \rho'(g)$ is the zero matrix for all $\rho' \in \mathcal{R}(G)$ such that $\rho' \ne \rho$.
\end{enumerate}
It follows that if $\mathcal{B}$ is a symmetry adapted basis with respect to $\mathcal{R}(G)$ for the $\mathbb{C}G$-module $M$, then each basis vector in $\mathcal{B}$ is an eigenvector for $e$ with eigenvalue 1 or 0. 

We call the subspace of $M$ spanned by the vectors in $\mathcal{B}$ that are eigenvectors for $e$ with eigenvalue 1 the \emph{frequency space} corresponding to $e$.  It is the subspace
\[
eM = \{ e m \ | \ m \in M \}.
\]
If we let $\mathcal{E}(G)$ denote the set of primitive idempotents in $\mathbb{C}(G)$ corresponding to $\mathcal{R}(G)$, then we may write $M$ as a direct sum
\[
M = \bigoplus_{e \in \mathcal{E}(G)} eM
\]
of frequency spaces. (This is known as a \emph{Pierce decomposition} of the module $M$.  See, for example,  \cite{drozd-kirichenko-1994}.) In this case, we will say that $eM$ is a \emph{frequency space of $M$ with respect to $\mathcal{R}(G)$}.

\begin{lemma}\label{freqspacelemma}
Let $G$ be a finite group, and let $M$ be a finite-dimensional $\mathbb{C}G$-module.  Suppose $\mathcal{R}(G) = \{\rho_1, \dots, \rho_h\}$ and that $U_j$ is an irreducible $\mathbb{C}G$-module corresponding to $\rho_j$. If $e$ is a primitive idempotent associated to $\rho_j$ as described above, and $M \cong \alpha_1 U_1 \oplus \cdots \oplus \alpha_h U_h$, then the frequency space $eM$ has dimension $\alpha_j$.
\end{lemma}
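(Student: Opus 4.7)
The plan is to exploit the module decomposition $M \cong \alpha_1 U_1 \oplus \cdots \oplus \alpha_h U_h$ and reduce the computation of $\dim(eM)$ to understanding how $e$ acts on a single copy of each irreducible $U_k$. Since $eM = \bigoplus_{k,\ell} e(U_k^{(\ell)})$, where $U_k^{(\ell)}$ denotes the $\ell$-th copy of $U_k$ inside $M$, it suffices to show that $e$ acts as the zero map on each $U_k$ with $k \ne j$, and as a rank-one map on each copy of $U_j$. Summing the resulting dimensions will yield $\alpha_j \cdot 1 + \sum_{k \ne j} \alpha_k \cdot 0 = \alpha_j$.

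To carry this out, I would fix, for each $k$, a basis of $U_k$ realizing the representation $\rho_k$, so that the matrix of any $g \in G$ acting on $U_k$ is exactly $\rho_k(g)$. Writing $e = \sum_{g \in G} \epsilon(g) g$, linearity of the action then gives that the matrix of $e$ acting on $U_k$ is precisely $\sum_{g \in G} \epsilon(g) \rho_k(g)$. Property (2) of the primitive idempotent immediately yields that this matrix is zero whenever $k \ne j$, so $e$ annihilates each copy of such a $U_k$. Property (1) says that when $k = j$ this matrix is the elementary matrix $E_{ii}$, which has rank one, so $e$ acts on each copy of $U_j$ as a rank-one projection onto a single coordinate line.

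Combining these observations, the image $eM$ decomposes as a direct sum of one-dimensional subspaces, one from each of the $\alpha_j$ copies of $U_j$, giving $\dim(eM) = \alpha_j$ as claimed. The mildly delicate point, and really the only place one must be careful, is the passage from the abstract isomorphism $M \cong \alpha_1 U_1 \oplus \cdots \oplus \alpha_h U_h$ to a concrete basis in which $g$ acts block-diagonally via the matrices $\rho_k(g)$; once that is in hand, the rest is a straightforward rank count using the two defining properties of the primitive idempotent $e$.
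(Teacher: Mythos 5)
Your argument is correct and follows essentially the same route as the paper: both pass to a symmetry adapted basis for $M$, compute the matrix of $e$ block by block using properties (1) and (2) of the primitive idempotent, and conclude that the matrix has rank $\alpha_j$. The only stylistic difference is that you make the per-block rank count explicit, while the paper states it in one sentence.
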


\begin{proof}
Let $\mathcal{B}$ be a symmetry adapted basis of $M$ with respect to $\mathcal{R}(G)$.  If $e = \sum_{g \in G} \epsilon(g) g$, then by construction, the matrix $[e]_\mathcal{B} = \sum_{g \in G} \epsilon(g) [g]_\mathcal{B}$ will have zeros everywhere except for $\alpha_j$ $1$'s on its diagonal, one for each copy of $U_j$ in $M$. It follows that $[e]_\mathcal{B}$ has rank $\alpha_j$, and thus the dimension of $eM$ is $\alpha_j$. 
\end{proof}

Suppose now that $H$ is a subgroup of $G$, and that $\mathcal{B}$ is a symmetry adapted basis with respect to $\mathcal{R}(H)$ and $\mathcal{R}(G)$. The following lemma shows that the frequency spaces for $H$ and $G$ are nicely related.  

\begin{lemma}\label{spacedecomp}
Let $G$ be a finite group, let $H$ be a subgroup of $G$, and let $M$ be a finite-dimensional $\mathbb{C}G$-module.  If $\mathcal{R}(H)$ and $\mathcal{R}(G)$ are compatible with respect to $M$, then each frequency space of $M$ with respect to $\mathcal{R}(H)$ is a direct sum of frequency spaces of $M$ with respect to $\mathcal{R}(G)$.
\end{lemma}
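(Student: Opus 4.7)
The plan is to pick a basis $\mathcal{B}$ that is simultaneously symmetry adapted with respect to both $\mathcal{R}(H)$ and $\mathcal{R}(G)$, which exists by the compatibility hypothesis. Every basis vector $b \in \mathcal{B}$ is an eigenvector with eigenvalue $0$ or $1$ for every primitive idempotent in $\mathcal{E}(H) \cup \mathcal{E}(G)$, so $b$ lies in a unique $H$-frequency space and a unique $G$-frequency space. Since the $H$-frequency spaces give a direct sum decomposition of $M$ and are each spanned by a subset of $\mathcal{B}$, it suffices to show that every $G$-frequency space $eM$ is contained entirely in a single $H$-frequency space; grouping the $G$-frequency spaces by which $H$-frequency space contains them then yields the desired decomposition.

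Using the $\mathcal{R}(G)$-adapted structure, I would write $M = \bigoplus_i U_i$ as a direct sum of $G$-irreducibles, where each $U_i$ carries an ordered basis $\{b_1^{(i)}, \ldots, b_{d_i}^{(i)}\} \subset \mathcal{B}$ chosen so that the action of $G$ on $U_i$ is realized exactly by some $\rho_i \in \mathcal{R}(G)$. The $G$-frequency space attached to the primitive idempotent $e$ for $\rho$ at position $k$ is then the span of $\{b_k^{(i)} : \rho_i = \rho\}$. The crux is to show that if a primitive $H$-idempotent $\tilde{e}$ satisfies $\tilde{e}\, b_k^{(i)} = b_k^{(i)}$ for one index $i$ with $\rho_i = \rho$, then the same equality holds for every other $i'$ with $\rho_{i'} = \rho$.

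For that step, I would compute directly: writing $\tilde{e} = \sum_{h \in H} \epsilon(h) h$, we have
\[
\tilde{e}\, b_k^{(i)} = \sum_{l} \left( \sum_{h \in H} \epsilon(h) \rho(h)_{lk} \right) b_l^{(i)},
\]
and the inner scalar coefficients depend only on $\rho$, not on the particular copy $U_i$ in which the basis vector sits. Because $\mathcal{B}$ is $\mathcal{R}(H)$-adapted, this expression must be either $b_k^{(i)}$ or $0$, and which of these occurs is dictated by the same $\rho$-dependent coefficients for every copy. Hence membership of $b_k^{(i)}$ in $\tilde{e}M$ is uniform across all $i$ with $\rho_i = \rho$, which places the whole $G$-frequency space for $(\rho,k)$ inside a single $H$-frequency space. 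The one point that genuinely needs care is this uniformity across isomorphic copies of the same $G$-irreducible, and it is exactly the dual adaptedness of $\mathcal{B}$ that makes the coefficient matrix of $\tilde{e}$ on each copy identical, so the obstacle dissolves once the above calculation is written down.
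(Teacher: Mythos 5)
Your proof is correct, and it takes a genuinely more explicit route than the paper's terse two-sentence argument. The paper's proof reduces without loss of generality to compatibility with respect to the regular $\mathbb{C}G$-module and then cites the algebraic fact that each primitive idempotent $\tilde{e} \in \mathcal{E}(H)$ is a sum of primitive idempotents in $\mathcal{E}(G)$; the conclusion $\tilde{e}M = \bigoplus_i e_i M$ is then immediate. You instead stay entirely inside the module $M$ and work with the doubly adapted basis $\mathcal{B}$: your central computation, that the matrix of $\tilde{e}$ on each copy $U_i$ of a $G$-irreducible of type $\rho$ is the fixed matrix $\sum_{h \in H} \epsilon(h)\rho(h)$ independent of which copy one looks at, is exactly the concrete manifestation of the idempotent decomposition the paper outsources to the structure of Gelfand--Tsetlin-type idempotent systems. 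One small point worth spelling out if you write this up: to pass from ``each $G$-frequency space $eM$ lies inside a single $H$-frequency space'' to the stated conclusion, you should note that both families of frequency spaces give direct sum decompositions of $M$, so that for a fixed $\tilde{e}_0$ the containment $\bigoplus_{e \,:\, eM \subseteq \tilde{e}_0 M} eM \subseteq \tilde{e}_0 M$ is forced to be an equality by a dimension count over all $\tilde{e}_0$. Your version is longer but more transparent and avoids the paper's somewhat opaque reduction to the regular module; the paper's version is shorter but relies on a background fact it does not re-derive. Both establish the same refinement relation and arrive at the same grouping at the end.
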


\begin{proof}
Without loss of generality, we may assume that $\mathcal{R}(H)$ and $\mathcal{R}(G)$ are also compatible with respect to the regular $\mathbb{C}G$-module. The result then follows from the fact that, by our definition of compatibility, each primitive idempotent in $\mathcal{E}(H)$ is a sum of primitive idempotents in $\mathcal{E}(G)$. 
\end{proof}

Finally, we introduce some notation and one more term that will be helpful in the next section.  

Suppose $G$ acts transitively on a finite set $X$. Let $H$ be a subgroup of $G$, let $N_1, \dots, N_s$ be a complete collection of irreducible $\mathbb{C}H$-modules, and suppose that as a $\mathbb{C}H$-module, we have that 
\[
\mathbb{C}X \cong \kappa_1 N_1 \oplus \cdots \oplus \kappa_s N_s.
\]
We will let $K_X(G,H)$ denote $\max\{ \kappa_1, \dots, \kappa_s \}$. Also, when we consider the frequency spaces of $\mathbb{C}X$ as a $\mathbb{C}H$-module, we will let $\Phi_X(G,H)$ denote the sum of the squares of the dimensions of these frequency spaces. By Lemma~\ref{freqspacelemma}, it follows that
\[
\Phi_X(G,H) = \kappa_1^2 \dim(N_1) + \cdots + \kappa_s^2 \dim(N_s).
\]

If the action of $G$ on $X$ is not transitive, and the orbits are $X_1, \dots, X_t$, then we will use $\Phi_X(G,H)$ to denote the sum 
\[
\Phi_{X_1}(G,H) + \cdots + \Phi_{X_t}(G,H)
\]
where we apply the previous definition of $\Phi_{X_i}(G,H)$ to each of the $X_i$. Similarly, we will use $K_X(G,H)$ to instead denote $\max \{ K_{X_1}(G,H), \dots, K_{X_t}(G,H) \}$. Finally, we will refer to the frequency spaces of the $\mathbb{C}X_i$ with respect to $\mathcal{R}(H)$ as \emph{$(G,H)$-orbital frequency spaces}.


\section{General Upper Bounds}\label{Section:  General Upper Bounds}

Let $G$ be a finite group acting on a finite set $X$, let $\mathbb{C}X$ be the associated $\mathbb{C}G$-permutation module, and let $f \in \mathbb{C}X$.  In this section, we will assume we are given the subgroup chain
\[
\{1\} = G_1 < \cdots < G_n = G
\]
and collections $\mathcal{R}(G_1), \dots, \mathcal{R}(G_n)$ of irreducible representations that are compatible with respect to $\mathbb{C}X$.  We will also assume that we are given harmonic bases $\mathcal{B}_1, \dots, \mathcal{B}_n$ of $\mathbb{C}X$, where 
\begin{enumerate}
    \item $\mathcal{B}_j$ is an orbital symmetry adapted basis with respect to $\mathcal{R}(G_1), \dots, \mathcal{R}(G_j)$, and 
    \item $\mathcal{B}_1$ is the standard basis of $\mathbb{C}X$.
\end{enumerate}

In this section, we find bounds for the number of arithmetic operations necessary to compute the coordinate vector $[f]_{\mathcal{B}_n}$ when given the coordinate vector $[f]_{\mathcal{B}_1}$.

\subsection{Nonzero Entries in Matrices}

If $A$ is a nonzero $m \times m$ matrix with complex entries, then we will denote the number of nonzero entries in $A$ by $\nu(A)$.  We will view $\nu(A)$ as a measure of how difficult it is to multiply the matrix $A$ and an arbitrary vector in $\mathbb{C}^m$.  In particular, note that such a product will require no more than $\nu(A)$ multiplications and strictly fewer than $\nu(A)$ additions.

If we let $\omega(A)$ denote the total number of arithmetic operations required to compute the product of a nonzero $m \times m$ matrix $A$ and an arbitrary vector in $\mathbb{C}^m$, note that we then have that $\omega(A) < 2 \nu(A)$.  Furthermore, if $A$ can be factored as a product $A = A_1 \cdots A_l$, then $\omega(A) < 2 \nu(A_1) + \cdots + 2 \nu(A_l)$.

Suppose now that $V$ is an $m$-dimensional vector space, and that $\mathcal{B}$ and $\mathcal{B'}$ are bases for $V$.  Let $C(\mathcal{B}, \mathcal{B}')$ be the change of basis matrix from the basis $\mathcal{B}'$ to the basis $\mathcal{B}$.  In other words, $C(\mathcal{B}, \mathcal{B}')$ is the unique $m \times m$ matrix such that
\[
C(\mathcal{B}, \mathcal{B}') [v]_{\mathcal{B}'} = [v]_\mathcal{B}
\]
for all $v \in V$.  In this case, note that $\nu( C(\mathcal{B}, \mathcal{B}') ) \le m^2$.  Also, if $\mathcal{B}''$ is another basis for $V$, then $C(\mathcal{B}, \mathcal{B}'') = C(\mathcal{B}, \mathcal{B}') C(\mathcal{B}', \mathcal{B}'')$ and thus
\[
\omega( C(\mathcal{B}, \mathcal{B}'') ) <  2 \nu( C(\mathcal{B}, \mathcal{B}') ) + 2 \nu( C(\mathcal{B}', \mathcal{B}'') ).
\]

It follows that if we are given harmonic bases $\mathcal{B}_1, \dots, \mathcal{B}_n$ of $\mathbb{C}X$ corresponding to the chain
\[
\{1\} = G_1 < \cdots < G_n = G
\]
then we may compute $[f]_{\mathcal{B}_n}$ by iteratively computing the product $C(\mathcal{B}_{j}, \mathcal{B}_{j-1}) [f]_{\mathcal{B}_{j-1}} = [f]_{\mathcal{B}_j}$ as $j$ goes from $2$ to $n$. Furthermore, this approach will be more efficient than the usual naive approach of simply computing $C(\mathcal{B}_n,\mathcal{B}_1)[f]_{\mathcal{B}_1} = [f]_{\mathcal{B}_n}$ whenever
\[
\nu(C(\mathcal{B}_n, \mathcal{B}_{n-1})) + \cdots + \nu(C(\mathcal{B}_{2}, \mathcal{B}_{1}))
\]
is small relative to $(\dim \mathbb{C}X)^2 = | X |^2$. We would therefore like to find upper bounds for each of the $\nu(C(\mathcal{B}_{j}, \mathcal{B}_{j-1}))$ that are much smaller than $|X|^2$.

\subsection{Bounds Based on the Dimensions of Frequency Spaces}\label{Section:  Bounds Based on the Dimensions of Frequency Spaces}

The following theorem is our main theorem. It provides a bound on $\nu(C(\mathcal{B}_j, \mathcal{B}_{j-1}))$ in terms of the dimensions of the frequency spaces of $\mathbb{C}X$ when viewed as a $\mathbb{C}G_{j-1}$-module. 

\begin{theorem}\label{cobbound}
Let $2 \le j \le n$. The number of nonzero entries in the change of basis matrix $C(\mathcal{B}_j, \mathcal{B}_{j-1})$ is bounded above by $\Phi_X(G_j,G_{j-1})$. In other words, 
\[
\nu(C(\mathcal{B}_j, \mathcal{B}_{j-1})) \le \Phi_X(G_j,G_{j-1}).
\]
Furthermore, each column of $C(\mathcal{B}_j, \mathcal{B}_{j-1})$ has at most $K_X(G_j,G_{j-1})$ nonzero entries, and thus we also have that 
\[
\nu( C(\mathcal{B}_j, \mathcal{B}_{j-1}) ) \le K_X(G_j, G_{j-1}) |X|.
\]
\end{theorem}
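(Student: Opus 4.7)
The plan is to leverage the fact that both $\mathcal{B}_{j-1}$ and $\mathcal{B}_j$ are symmetry adapted with respect to $\mathcal{R}(G_{j-1})$, which forces $C(\mathcal{B}_j,\mathcal{B}_{j-1})$ to be block diagonal along the $G_{j-1}$-frequency space decomposition of $\mathbb{C}X$. Combining this with the orbital structure will carve $C(\mathcal{B}_j,\mathcal{B}_{j-1})$ into blocks small enough to yield both stated bounds.

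First I would unravel the orbital refinement. Because $G_{j-1} < G_j$, the $G_{j-1}$-orbit partition of $X$ refines the $G_j$-orbit partition, and since $\mathcal{B}_{j-1}$ is orbital with respect to $G_{j-1}$ it is \emph{a fortiori} orbital with respect to $G_j$. Hence both $\mathcal{B}_{j-1}$ and $\mathcal{B}_j$ split into sub-bases for $\mathbb{C}X_i$ as $X_i$ ranges over the $G_j$-orbits of $X$, which makes $C(\mathcal{B}_j,\mathcal{B}_{j-1})$ block diagonal with one block per $G_j$-orbit. So it will suffice to bound $\nu$ on each such orbital block.

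Next, fix a $G_j$-orbit $X_i$ and regard $\mathbb{C}X_i$ as a $\mathbb{C}G_{j-1}$-module. The sub-bases of $\mathcal{B}_{j-1}$ and $\mathcal{B}_j$ spanning $\mathbb{C}X_i$ are each symmetry adapted with respect to $\mathcal{R}(G_{j-1})$, so each further partitions into bases for the frequency spaces $e\mathbb{C}X_i$ with $e \in \mathcal{E}(G_{j-1})$. Thus the $X_i$-block of $C(\mathcal{B}_j,\mathcal{B}_{j-1})$ is itself block diagonal, with one sub-block per primitive idempotent $e$, of size $\dim(e\mathbb{C}X_i)\times\dim(e\mathbb{C}X_i)$. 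Using Lemma~\ref{freqspacelemma} to identify $\dim(e\mathbb{C}X_i)$ with the appropriate multiplicity, the sum $\sum_e \dim(e\mathbb{C}X_i)^2$ is by definition $\Phi_{X_i}(G_j,G_{j-1})$, and summing over $G_j$-orbits then gives the first bound.

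For the column bound, each column of $C(\mathcal{B}_j,\mathcal{B}_{j-1})$ corresponds to a single basis vector $b \in \mathcal{B}_{j-1}$ that lies in exactly one frequency space $e\mathbb{C}X_i$, and its $\mathcal{B}_j$-coordinates are supported only on the basis vectors of $\mathcal{B}_j$ that lie in this same $e\mathbb{C}X_i$. By Lemma~\ref{freqspacelemma}, this support has size equal to the multiplicity of the corresponding irreducible $\mathbb{C}G_{j-1}$-module in $\mathbb{C}X_i$, which is at most $K_{X_i}(G_j,G_{j-1}) \le K_X(G_j,G_{j-1})$. Multiplying by the $|X|$ columns then yields the second bound. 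The main subtlety to pin down carefully is the compatibility story: one must verify that symmetry adaptation with respect to $\mathcal{R}(G_{j-1})$ genuinely forces the change of basis to preserve the $G_{j-1}$-frequency decomposition of each $\mathbb{C}X_i$, a point that ultimately rests on Lemma~\ref{spacedecomp} and the fact that the idempotents in $\mathcal{E}(G_{j-1})$ act diagonally with entries $0$ or $1$ on any basis adapted to $\mathcal{R}(G_{j-1})$.
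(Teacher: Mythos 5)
Your proof is correct and takes essentially the same route as the paper's: both decompose $C(\mathcal{B}_j,\mathcal{B}_{j-1})$ into blocks indexed by the $(G_j,G_{j-1})$-orbital frequency spaces, then bound the block sizes via Lemma~\ref{freqspacelemma}. You make the block-diagonal structure more explicit than the paper (which leans more directly on Lemma~\ref{spacedecomp} to justify that the change of basis stays inside each $G_{j-1}$-frequency space of each $\mathbb{C}X_i$), but the underlying idea and the use of the two lemmas are the same.
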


\begin{proof}
This follows from Lemma~\ref{freqspacelemma}, Lemma~\ref{spacedecomp}, and the fact that $\mathcal{B}_{j-1}$ and $\mathcal{B}_j$ are orbital symmetry adapted bases. In particular, suppose the action of $G_j$ on $X$ partitions $X$ into orbits $X_1, \dots, X_t$.  By Lemma~\ref{spacedecomp}, the change of basis from $\mathcal{B}_{j-1}$ to $\mathcal{B}_j$  only involves doing a change of basis within every frequency space, when viewed as a $\mathbb{C}G_{j-1}$-module, of each submodule $\mathbb{C}X_i$ of $\mathbb{C}X$. Thus, by Lemma~\ref{freqspacelemma}, each column of $C(\mathcal{B}_j, \mathcal{B}_{j-1})$ will have at most $K_X(G_j,G_{j-1})$ nonzero entries. Also, since a change of basis matrix for a $d$-dimensional vector space has at most $d^2$ nonzero entries, the number of nonzero entries in the change of basis matrix $C(\mathcal{B}_j, \mathcal{B}_{j-1})$ is bounded above by $\Phi_X(G_j,G_{j-1})$, which is simply the sum of the squares of the dimensions of the $(G_j, G_{j-1})$-orbital frequency spaces for $\mathbb{C}G_{j-1}$ (in each $\mathbb{C}X_i$).
\end{proof}

\begin{corollary}\label{together}
If $f\in \mathbb{C}X$, then the number $\omega(C(\mathcal{B}_n,\mathcal{B}_1))$ of arithmetic operations necessary to compute $[f]_{\mathcal{B}_n}$ when given $[f]_{\mathcal{B}_1}$ is strictly bounded above by
\[
2\Phi_X(G_n,G_{n-1}) + \cdots + 2\Phi_X(G_2,G_1).
\]
\end{corollary}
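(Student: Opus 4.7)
The plan is to assemble this result directly from the iterative change-of-basis strategy described in the subsection on nonzero entries together with the bound from Theorem~\ref{cobbound}. First, I would observe that since $C(\mathcal{B}_n,\mathcal{B}_1) = C(\mathcal{B}_n,\mathcal{B}_{n-1}) \cdots C(\mathcal{B}_2,\mathcal{B}_1)$, the product $C(\mathcal{B}_n,\mathcal{B}_1)[f]_{\mathcal{B}_1}$ can be computed by repeatedly applying the change-of-basis matrices one at a time, going from $[f]_{\mathcal{B}_1}$ to $[f]_{\mathcal{B}_2}$, then to $[f]_{\mathcal{B}_3}$, and so on. By the remark on factored matrix-vector products (the inequality $\omega(A) < 2\nu(A_1) + \cdots + 2\nu(A_l)$ when $A = A_1 \cdots A_l$), this gives
\[
\omega(C(\mathcal{B}_n,\mathcal{B}_1)) < 2\nu(C(\mathcal{B}_n,\mathcal{B}_{n-1})) + \cdots + 2\nu(C(\mathcal{B}_2,\mathcal{B}_1)).
\]

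Next, I would apply Theorem~\ref{cobbound} to each term individually: for every $j$ with $2 \le j \le n$, we have $\nu(C(\mathcal{B}_j,\mathcal{B}_{j-1})) \le \Phi_X(G_j,G_{j-1})$. Substituting these bounds into the previous inequality yields the claimed bound
\[
\omega(C(\mathcal{B}_n,\mathcal{B}_1)) < 2\Phi_X(G_n,G_{n-1}) + \cdots + 2\Phi_X(G_2,G_1).
\]

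Since both the factorization cost bound and Theorem~\ref{cobbound} are already in place, there is essentially no obstacle here; the corollary is a clean packaging of the per-step bounds into an overall bound, and the proof is just the two-line chain of inequalities above.
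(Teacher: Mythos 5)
Your proof is correct and follows essentially the same route as the paper: factor the change-of-basis matrix along the subgroup chain, bound the cost of each factor by $2\nu(\cdot)$, and then apply Theorem~\ref{cobbound} to each $\nu(C(\mathcal{B}_j,\mathcal{B}_{j-1}))$. The only cosmetic difference is that the paper first writes $\omega(C(\mathcal{B}_n,\mathcal{B}_1)) \leq \sum_j \omega(C(\mathcal{B}_j,\mathcal{B}_{j-1}))$ and then bounds each $\omega$ by $2\nu$, whereas you invoke the combined inequality $\omega(A) < 2\nu(A_1) + \cdots + 2\nu(A_l)$ directly.
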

\begin{proof}
Recall that $C(\mathcal{B}_n,\mathcal{B}_1)[f]_{\mathcal{B}_1}=[f]_{\mathcal{B}_n}$. The result then follows directly from the fact that 
\[
\omega(C(\mathcal{B}_n,\mathcal{B}_1))\leq \omega(C(\mathcal{B}_{n},\mathcal{B}_{n-1})) + \cdots + \omega(C(\mathcal{B}_2,\mathcal{B}_1))
\]
and because by Theorem \ref{cobbound} we have that
\[
\omega(C(\mathcal{B}_{j},\mathcal{B}_{j-1})) < 2\nu(C(\mathcal{B}_{j},\mathcal{B}_{j-1})) \leq 2\Phi_X(G_j,G_{j-1})
\]
for all $j$ such that $2 \le j \le n$.
\end{proof}

\subsection{Regular Representations}

Before we apply the results above to the harmonic analysis of partially ranked data in the next section, we finish this section by considering the case where the permutation module in question is the regular representation of $G$. 

Suppose $H$ is a finite group, and that $d_1,\dots, d_s$ are the dimensions of the irreducible representations of $H$. Define $d^3(H)=d_1^3+\dots+ d_s^3$, and let $[G:H]$ denote the index of $H$ in $G$. 

\begin{lemma}\label{onestep}
Let $G$ be a finite group acting on $X = G$ by left multiplication so that $\mathbb{C}X=\mathbb{C}G$ is the regular $\mathbb{C}G$-module. If $H$ is a subgroup of $G$, then $\Phi_G(G,H)=[G:H]^2d^3(H)$.
\end{lemma}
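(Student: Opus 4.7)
The plan is to unpack the definition of $\Phi_G(G,H)$ by finding the decomposition of $\mathbb{C}G$ as a $\mathbb{C}H$-module under restriction and then applying the formula $\Phi_X(G,H) = \sum_i \kappa_i^2 \dim(N_i)$ from the preceding paragraph. Note that $G$ acts transitively on $X = G$ by left multiplication, so we are squarely in the setting of the transitive definition.

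First I would describe the restriction $\mathbb{C}G \downarrow H$. Since $H$ acts on $G$ by left multiplication, the $H$-orbits on $G$ are precisely the right cosets $Hg$, and each such orbit is isomorphic to $H$ as an $H$-set via $hg \mapsto h$. There are $[G:H]$ such cosets, so
\[
\mathbb{C}G \cong [G:H]\,\mathbb{C}H
\]
as $\mathbb{C}H$-modules, where $\mathbb{C}H$ denotes the regular $\mathbb{C}H$-module. Second, I would invoke the standard fact that the regular representation of $H$ decomposes as $\mathbb{C}H \cong \bigoplus_{i=1}^{s} d_i N_i$, where each irreducible $\mathbb{C}H$-module $N_i$ appears with multiplicity equal to its dimension $d_i = \dim(N_i)$. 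Combining these gives
\[
\mathbb{C}G \cong \bigoplus_{i=1}^{s} \bigl([G:H]\, d_i\bigr)\, N_i
\]
as $\mathbb{C}H$-modules, so in the notation of the preceding paragraph $\kappa_i = [G:H]\,d_i$ and $\dim(N_i) = d_i$.

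Finally, I would substitute directly into the formula for $\Phi$:
\[
\Phi_G(G,H) \;=\; \sum_{i=1}^{s} \kappa_i^2 \dim(N_i) \;=\; \sum_{i=1}^{s} \bigl([G:H]\,d_i\bigr)^2 d_i \;=\; [G:H]^2 \sum_{i=1}^{s} d_i^3 \;=\; [G:H]^2\, d^3(H),
\]
which is the desired identity.

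There is no real obstacle here; the only subtlety worth stating carefully is the orbit computation under left multiplication (to make sure one writes $Hg$ rather than $gH$, although for the final count only the number of cosets matters), together with invoking the decomposition of the regular representation of $H$. Everything else is bookkeeping against the definition of $\Phi$.
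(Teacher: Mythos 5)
Your proof is correct and follows essentially the same route as the paper: identify the $H$-orbits on $G$ with right cosets to get $\mathbb{C}G \cong [G:H]\,\mathbb{C}H$, decompose the regular $\mathbb{C}H$-module, and substitute $\kappa_i = [G:H]\,d_i$ into the formula $\Phi_X(G,H) = \sum_i \kappa_i^2 \dim(N_i)$. The only difference is cosmetic: you spell out the final substitution, whereas the paper cites Lemma~\ref{freqspacelemma} and the surrounding discussion.
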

\begin{proof}
Let $N_1,\dots, N_s$ be a complete collection of irreducible $\mathbb{C}H$-modules. The orbits of $G$ under the action of $H$ are the right cosets of $H$ in $G$. Therefore, as a $\mathbb{C}H$-module, $\mathbb{C}G\cong \bigoplus_{i=1}^{[G:H]}\mathbb{C}H$. As a $\mathbb{C}H$-module, $\mathbb{C}H \cong \bigoplus_{i = 1}^s \dim(N_i)N_i$, so we therefore have that 
\[
\mathbb{C}G \cong [G:H]\dim(N_1) N_1 \oplus \cdots \oplus [G:H]\dim(N_s) N_s.
\]
The result now follows from Lemma~\ref{freqspacelemma} and the discussion after Lemma~\ref{spacedecomp}.
\end{proof}

By recursively applying Lemma~\ref{onestep} to a chain of subgroups, we get the following theorem, which is essentially Theorem~1.1 of \cite{clausen-1989}, and which provides a general upper bound for applying a discrete Fourier transform to a finite group and therefore a bound for doing harmonic analysis on the regular representation of a finite group.  

\begin{theorem}
Let $G$ be a finite group acting on $X = G$ by left multiplication so that $\mathbb{C}X=\mathbb{C}G$ is the regular $\mathbb{C}G$-module. Let 
\[
\{1\} = G_1 < \cdots < G_n = G
\]
be a chain of subgroups of $G$, and suppose $\mathcal{R}(G_1), \dots, \mathcal{R}(G_n)$ are compatible with respect to $\mathbb{C}X$. Let $\mathcal{B}_1, \dots, \mathcal{B}_n$ be harmonic bases of $\mathbb{C}X$, where $\mathcal{B}_j$ is an orbital symmetry adapted basis with respect to $\mathcal{R}(G_1), \dots, \mathcal{R}(G_j)$, and $\mathcal{B}_1$ is the standard basis of $\mathbb{C}X$. If $q_j = [G_j:G_{j-1}]$, then 
\[
\omega(C(\mathcal{B}_n,\mathcal{B}_1)) < 2\sum_{j=2}^n(q_j^2q_{j+1}\cdots q_nd^3(G_{j-1})).
\]
\end{theorem}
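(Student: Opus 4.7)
The plan is to combine Corollary~\ref{together} with Lemma~\ref{onestep} applied orbit-by-orbit. By Corollary~\ref{together} we already have
\[
\omega(C(\mathcal{B}_n,\mathcal{B}_1)) < 2\sum_{j=2}^n \Phi_X(G_j,G_{j-1}),
\]
so the theorem will follow as soon as I can show that
\[
\Phi_X(G_j,G_{j-1}) = q_{j+1}\cdots q_n \cdot q_j^2\, d^3(G_{j-1}),
\]
using the standard convention that the empty product (occurring when $j=n$) equals $1$.

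The key observation is that the orbits of $G_j$ acting on $X=G$ by left multiplication are precisely the right cosets $G_j x$, of which there are $[G:G_j] = q_{j+1}\cdots q_n$. For any chosen coset representative $x_0$, the map $g \mapsto g x_0$ is an isomorphism of $\mathbb{C}G_j$-modules from the regular module $\mathbb{C}G_j$ onto $\mathbb{C}(G_j x_0)$. Consequently, each orbital summand $\mathbb{C}(G_j x_0)$ has, as a $\mathbb{C}G_j$-module, exactly the same decomposition into $(G_j,G_{j-1})$-orbital frequency spaces as the regular $\mathbb{C}G_j$-module. By the additive definition of $\Phi$ over orbits given at the end of Section~2, this yields
\[
\Phi_X(G_j,G_{j-1}) = [G:G_j]\, \Phi_{G_j}(G_j,G_{j-1}).
\]

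Finally, I would invoke Lemma~\ref{onestep} with $G$ replaced by $G_j$ and $H$ replaced by $G_{j-1}$ to obtain $\Phi_{G_j}(G_j,G_{j-1}) = q_j^2\, d^3(G_{j-1})$. Substituting $[G:G_j] = q_{j+1}\cdots q_n$ produces the desired expression for $\Phi_X(G_j,G_{j-1})$, and inserting this into the bound from Corollary~\ref{together} finishes the argument. There is no serious obstacle here; the proof is essentially a bookkeeping assembly of earlier results. The only delicate point is remembering that Lemma~\ref{onestep} is stated for the regular representation of the \emph{ambient} group being acted upon, so it must be applied with the ambient group taken to be $G_j$ acting on one of its own left-multiplication orbits in $G$, and the factor $[G:G_j]$ records the number of such orbits.
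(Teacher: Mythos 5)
Your proof is correct and follows the paper's own argument: you apply Corollary~\ref{together}, decompose $X=G$ into the $[G:G_j]=q_{j+1}\cdots q_n$ orbits of $G_j$ (each isomorphic as a $\mathbb{C}G_j$-module to the regular module $\mathbb{C}G_j$), and invoke Lemma~\ref{onestep} with ambient group $G_j$ and subgroup $G_{j-1}$ on each orbit to obtain $\Phi_{G_j}(G_j,G_{j-1})=q_j^2 d^3(G_{j-1})$. In fact you spell out the orbit-by-orbit step that the paper's proof leaves implicit, so your write-up is, if anything, slightly more explicit than the original.
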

\begin{proof}
By Theorem~\ref{cobbound} and Lemma~\ref{onestep}, the number of nonzero entries in $C(\mathcal{B}_j,\mathcal{B}_{j-1})$ is bounded above by
\[
\Phi_G(G_j,G_{j-1}) = 
[G:G_j][G_j:G_{j-1}]^2d^3(G_{j-1}) = q_n\cdots q_{j+1}q_j^2d^3(G_{j-1}).
\]
The result now follows from Corollary~\ref{together}.  
\end{proof}


\section{The Symmetric Group Acting on Tabloids}\label{Section:  The Symmetric Group Acting on Tabloids}

We now shift our attention to the permutation modules that arise when the symmetric group acts on \emph{tabloids} (see below for the definition). These objects have been used to index both fully and partially ranked data (see, for example, \cite{crisman-orrison-2017, daugherty-etal-2009}). The results in this section may therefore be viewed as statements about the efficient analysis of such data. Good references for many of the ideas and much of the notation found in this section are \cite{james-and-kerber-1981, sagan-2001, stanley-1999}.

\subsection{Tabloids}

Let $n$ be a positive integer. A \emph{weak composition} of $n$ is a sequence $\alpha=(\alpha_1,\dots,\alpha_k)$ of nonnegative integers satisfying $\sum \alpha_i=n$. If each of the summands $\alpha_1,\dots,\alpha_k$ is a positive integer, then we say $\alpha$ is a \emph{composition} of $n$. A \emph{partition} of $n$ is then a composition $\lambda=(\lambda_1,\dots,\lambda_k)$ of $n$ such that $\lambda_1 \geq \cdots \geq \lambda_k$. If $\lambda=(\lambda_1,\dots,\lambda_k)$ is a partition of $n$ then we write $\lambda\vdash n$ and say that $\lambda$ has $k$ \emph{parts}. 

Note that a weak composition $\alpha$ can be identified with a unique partition $\bar{\alpha}$ by writing the positive summands in weakly decreasing order. For example, if $\alpha=(3,4,0,1,0,2)$, then
\[
\bar{\alpha}=(4,3,2,1).
\]
We will make use of this fact below. 

Given a weak composition $\alpha = (\alpha_1,\dots,\alpha_k)$ of $n$, the \emph{Young diagram of shape} $\alpha$ is the left-justified array of boxes with $k$ rows and $\alpha_i$ boxes in the $i$th row. Filling these boxes with the numbers $1,\dots,n$, without repetition, creates a \emph{Young tableau of shape} $\alpha$. Two Young tableaux of shape $\alpha$ are then said to be \emph{row-equivalent} if they have the same set of numbers in each row. Each equivalence class of tableaux of shape $\alpha$ under this relation is a \emph{tabloid of shape} $\alpha$. We will use $X^\alpha$ to denote the set of all tabloids of shape $\alpha$. 

It is common to denote a tabloid by first forming a representative tableau and then removing the vertical dividers within each row (see Figure \ref{tabloid}).

\begin{figure}[h]
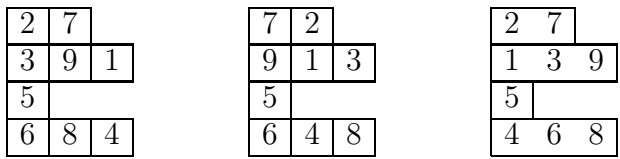

\begin{center}
\begin{tabular}{| c | c | c |} \cline{1-2}
2 & 7 & \multicolumn{1}{| c}{ } \\ \cline{1-3} 
3 & 9 & 1 \\ \cline{1-3}
5 & \multicolumn{2}{| c}{} \\ \cline{1-3}
6 & 8 & 4 \\ \cline{1-3}
\end{tabular}
\hspace{.5in}
\begin{tabular}{| c | c | c |} \cline{1-2}
7 & 2 & \multicolumn{1}{| c}{ } \\ \cline{1-3} 
9 & 1 & 3 \\ \cline{1-3}
5 & \multicolumn{2}{| c}{} \\ \cline{1-3}
6 & 4 & 8 \\ \cline{1-3}
\end{tabular}
\hspace{.5in} 
\begin{tabular}{| c c c |} \cline{1-2}
2 & 7 & \multicolumn{1}{| c}{ } \\ \cline{1-3} 
1 & 3 & 9 \\ \cline{1-3}
5 & \multicolumn{2}{| c}{} \\ \cline{1-3}
4 & 6 & 8 \\ \cline{1-3}
\end{tabular}
\end{center}
\caption{Two equivalent tableaux of shape $(2,3,1,3)$ and their tabloid.}
\label{tabloid}
\end{figure}

\subsection{The Action of $S_n$}
We may view $\mathbb{C}X^\alpha$ as a $\mathbb{C}S_n$-permutation module under the natural action of the symmetric group $S_n$ on $X^\alpha$, where if $\sigma\in S_n$ and $T\in X^\alpha$, then  $\sigma\cdot T$ is the tabloid obtained from $T$ by applying the permutation $\sigma$ to each entry of $T$ (see Figure \ref{tabact}). 

\begin{figure}[h]
\begin{center}
$(147)(56)\cdot$
\begin{tabular}{| c c c |} \cline{1-2}
2 & 7 & \multicolumn{1}{| c}{ } \\ \cline{1-3} 
1 & 3 & 9 \\ \cline{1-3}
5 & \multicolumn{2}{| c}{} \\ \cline{1-3}
4 & 6 & 8 \\ \cline{1-3}
\end{tabular}
=
\begin{tabular}{| c c c |} \cline{1-2}
2 & 1 & \multicolumn{1}{| c}{ } \\ \cline{1-3} 
4 & 3 & 9 \\ \cline{1-3}
6 & \multicolumn{2}{| c}{} \\ \cline{1-3}
7 & 5 & 8 \\ \cline{1-3}
\end{tabular}
\end{center}
\caption{The action of $\sigma=(147)(56)$ on a tabloid of shape $(2, 3, 1, 3)$.}
\label{tabact}
\end{figure}

Note that the action of $S_n$ on $X^\alpha$ does not depend on the order of the rows of $\alpha$. In particular, as $\mathbb{C}S_n$-modules, we have that $\mathbb{C}X^\alpha \cong \mathbb{C}X^{\bar{\alpha}}$. For convenience, we will therefore always assume that we are starting with a permutation module $\mathbb{C}X^\lambda$ where $\lambda$ is a partition.

Fortunately, there are well-studied collections $\mathcal{R}(S_1),\dots,\mathcal{R}(S_n)$ of irreducible representations that are pairwise compatible with respect to $\mathbb{C}X^\lambda$. For example, two possible such collections are Young's seminormal representations and Young's orthogonal representations. In fact, these are the representations most often used when doing harmonic analysis on tabloids, and these are the ones we suggest using if the reader wishes to implement any of the ideas that follow.

\subsection{Irreducible Representations}
There is a well-known parametrization of the irreducible representations of $S_n$ by the partitions of $n$. We use $S^\mu$ to denote the irreducible $\mathbb{C}S_n$-module corresponding to $\mu\vdash n$. ($S^\mu$ is called a \emph{Specht module}.) The multiplicity of $S^\mu$ in the decomposition of $\mathbb{C}X^\lambda$ into irreducible $\mathbb{C}S_n$-modules is given by the well-studied but elusive \emph{Kostka numbers}, which are defined below.

\begin{definition} Given two partitions $\lambda = (\lambda_1,\dots,\lambda_k),\mu=(\mu_1,\dots,\mu_{k'})$ of $n$, we say that $\mu$ \emph{dominates} $\lambda$, denoted $\lambda\unlhd\mu$, if
\[
\sum_{i=1}^j\lambda_i\leq \sum_{i=1}^j \mu_i
\]
for all $j$ such that $1\leq j\leq \max(k,k')$. If $\lambda\unlhd\mu$, then a \emph{Kostka filling of $\mu$ by $\lambda$} is any filling of a tableau of shape $\mu$ using exactly $\lambda_i$ $i$'s such that the entries in each row, when read from left to right, are non-decreasing and the entries of each column, when read from top to bottom, are strictly increasing.
\end{definition}
\begin{example} If $\lambda=(3,1,1)$ and $\mu=(4,1)$, then $\lambda\unlhd\mu$ and there are 2 Kostka fillings of $\mu$ by $\lambda$, namely
\[
\begin{array}{lll} 
\ytableausetup{centertableaux}
\begin{ytableau}
1 & 1 & 1& 2 \\
3
\end{ytableau} 
\textnormal{\hspace{.2in} and \hspace{.2in}}
\begin{ytableau}
1 & 1 & 1 & 3 \\
2
\end{ytableau}
\end{array}.
\]
\end{example}

Let $\kappa_{\mu,\lambda}$ denote the number of Kostka fillings of $\mu$ by $\lambda$. These are the \emph{Kostka numbers}, and Young's rule (see, for example, Theorem 2.11.2 in \cite{sagan-2001}) states that as a $\mathbb{C}S_n$-module, 
\begin{equation}\label{Young's Rule}\mathbb{C}X^\lambda\cong \bigoplus_{\mu: \, \lambda\unlhd\mu}\kappa_{\mu,\lambda}S^\mu.\end{equation}

In order to make use of Theorem \ref{cobbound} when dealing with $\mathbb{C}X^\lambda$, we need to find suitable bounds for $K_{X^{\lambda}}(S_j,S_{j-1})$ for each $2\leq j\leq n$. Recall that if the orbits of $X^{\lambda}$ under the action of $S_j$ are $X_1,\dots,X_t$, then $K_{X^{\lambda}}(S_j,S_{j-1})$ is the maximum over all of the $\mathbb{C}X_i$ of the dimensions of their $(S_j,S_{j-1})$-orbital frequency spaces. Combining an understanding of the orbits of $X_i$ under the action of $S_{j-1}$, and decomposing the resulting spaces into irreducible $\mathbb{C}S_{j-1}$-modules according to Young's rule will therefore enable us to determine the maximum frequency space dimension $K_{X^{\lambda}}(S_j,S_{j-1})$. We begin by considering the case where $j = n$, and then proceed recursively.

\subsection{Orbits Under the Action of $\mathbb{C}S_{n-1}$}

Suppose $\lambda \vdash n$. The orbits of $S_{n-1}$ acting on $X^\lambda$ are given by the ways to obtain a weak composition of $n-1$ from $\lambda$tracting 1 from a nonzero part $\lambda_i$ of $\lambda$. To describe this more formally, we introduce the following notation.
\begin{definition} If $\lambda=(\lambda_1,\dots,\lambda_k)$ is a partition, and $\alpha= (\lambda_1, \dots, \lambda_{i-1}, \lambda_i-1, \lambda_{i+1}, \dots, \lambda_k)$, then we will define $\lambda^i$ to be the partition $\overline{\alpha}$.
\end{definition}
\begin{example}
Let $\lambda=(4,2,1,1)$. Then $\lambda^1=(3,2,1,1)$, $\lambda^2=(4,1,1,1)$, and $\lambda^3=\lambda^4=(4,2,1)$.
\end{example}
\begin{theorem}\label{orbits} If $\lambda=(\lambda_1,\dots,\lambda_k)$ is a partition of $n$, then as a $\mathbb{C}S_{n-1}$-module, 
\[\mathbb{C}X^\lambda\cong\bigoplus_{i=1}^k\mathbb{C}X^{\lambda^i}.\]
\end{theorem}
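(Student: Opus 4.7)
The plan is to decompose $X^\lambda$ into $S_{n-1}$-subsets according to which row of a tabloid contains the element $n$, and then to exhibit a natural $S_{n-1}$-equivariant bijection between the $i$-th such subset and $X^{\lambda^i}$. Since $S_{n-1}$ embeds in $S_n$ as the stabilizer of $n$, any tabloid's ``row containing $n$'' is preserved by the $S_{n-1}$-action, so this partition is indeed $S_{n-1}$-invariant.

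First I would define, for each $i$ with $1 \le i \le k$, the subset
\[
O_i = \{ T \in X^\lambda \mid \text{$n$ lies in the $i$-th row of $T$}\}.
\]
These clearly partition $X^\lambda$. Next I would verify that each $O_i$ is actually a single $S_{n-1}$-orbit: given $T, T' \in O_i$, the entries in the non-$n$ positions of $T$ and $T'$ are both permutations of $\{1,\dots,n-1\}$, and a suitable element of $S_{n-1}$ carries one arrangement to the other (being careful that tabloids only remember the set of entries in each row, so any permutation within a row gives the same tabloid). This yields the $S_{n-1}$-module decomposition $\mathbb{C}X^\lambda = \bigoplus_{i=1}^k \mathbb{C}O_i$.

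Next I would construct the $S_{n-1}$-equivariant bijection $\psi_i : O_i \to X^{\lambda^i}$ by ``deleting $n$'': given $T \in O_i$, remove $n$ from row $i$ to obtain an array whose row lengths form the weak composition $\alpha = (\lambda_1,\dots,\lambda_{i-1},\lambda_i-1,\lambda_{i+1},\dots,\lambda_k)$. Since $X^\alpha$ and $X^{\bar\alpha} = X^{\lambda^i}$ are canonically identified (the action of $S_{n-1}$ and the definition of tabloid are insensitive to row order and to zero-length rows), this yields a well-defined element of $X^{\lambda^i}$. Because $S_{n-1}$ fixes $n$, this deletion commutes with the $S_{n-1}$-action, so $\psi_i$ is $S_{n-1}$-equivariant. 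Injectivity is immediate (restoring $n$ to row $i$ inverts it), and surjectivity follows because any tabloid of shape $\lambda^i$ can be obtained this way.

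The only subtle point — and the main technical care required — is the bookkeeping involved when $\lambda_i = 1$ (so $\lambda^i$ has one fewer part) and when several parts of $\lambda$ coincide (so that $\lambda^i = \lambda^j$ for distinct $i,j$, but $O_i$ and $O_j$ still give genuinely distinct summands in the direct sum). Once the convention $\lambda^i = \overline{\alpha}$ and the row-order-invariance of the tabloid notion are invoked, these become notational rather than substantive issues, and the decomposition $\mathbb{C}X^\lambda \cong \bigoplus_{i=1}^k \mathbb{C}X^{\lambda^i}$ follows.
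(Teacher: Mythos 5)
Your proposal is correct and follows essentially the same route as the paper's proof: partition $X^\lambda$ by the row containing $n$, observe that these are the $S_{n-1}$-orbits, and identify the $i$-th orbit with $X^{\lambda^i}$ equivariantly via deletion of $n$. You just spell out the equivariant bijection and the bookkeeping (row re-sorting, coincident parts) more explicitly than the paper does.
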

\begin{proof}
Two tabloids $T,T'\in X^\lambda$ are in the same orbit under the action of $S_{n-1}$ if and only if they contain $n$ in the same row. Furthermore, the action of $S_{n-1}$ on the orbit of a tabloid $T$ that contains $n$ in its $i$th row corresponds exactly to the action of $S_{n-1}$ on $X^{\lambda^i}$. It follows that, as a $\mathbb{C}S_{n-1}$-module,  $\mathbb{C}X^\lambda\cong\bigoplus_{i=1}^k\mathbb{C}X^{\lambda^i}.$
\end{proof}

\subsection{The Case $\lambda=(n-k,k)$}
The action of the symmetric group on two-rowed tabloids gives rise to a permutation module with a particularly simple decomposition into irreducible modules according to using Young's rule. As noted in Section~\ref{Section:  Introduction}, these tabloids arise when dealing with survey data for which respondents have been asked simply to choose their top $k$ items from a set of $n$ items, where $k \le n/2$.

\begin{lemma}\label{simpleyoung} If $\lambda=(n-k,k)$ is a partition of $n$, then as a $\mathbb{C}S_n$-module, 
\[
\mathbb{C}X^\lambda\cong \bigoplus_{0\leq l\leq k} S^{(n-l,l)}.
\]
In particular, this is a multiplicity-free decomposition.
\end{lemma}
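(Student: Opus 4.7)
The plan is to apply Young's rule (equation~\eqref{Young's Rule}) directly. That reduces the problem to two subtasks: (i) characterize the partitions $\mu \vdash n$ that dominate $\lambda = (n-k,k)$, and (ii) compute each Kostka number $\kappa_{\mu,\lambda}$ for these $\mu$.

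For (i), I would unpack the dominance condition in the case $\lambda = (n-k,k)$. The inequality at $j=2$ reads $\mu_1 + \mu_2 \geq \lambda_1 + \lambda_2 = n$; since $\mu$ is itself a partition of $n$, this forces every part beyond the second to vanish, so $\mu$ has at most two parts and $\mu_1 + \mu_2 = n$. The inequality at $j=1$ gives $\mu_1 \geq n-k$. Writing $\mu = (n-l,l)$, these conditions translate to $0 \leq l \leq k$ (the constraint $l \leq n/2$ needed for $\mu$ to be a partition is automatic from $l \leq k \leq n/2$).

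For (ii), I would show that $\kappa_{(n-l,l),(n-k,k)} = 1$ for every $0 \leq l \leq k$ by exhibiting a unique valid Kostka filling. Because the only available entries are $1$ and $2$ and every column of length two must be strictly increasing, each of the $l$ length-two columns is forced to carry a $1$ on top and a $2$ on the bottom. This determines the entire second row (all $2$'s) and the first $l$ positions of the first row (all $1$'s). The remaining $n-2l$ positions in the first row must contain $(n-k)-l$ ones and $k-l$ twos, and the weakly-increasing row condition forces the ones to precede the twos. Both counts are nonnegative thanks to $l \leq k \leq n/2 \leq n-k$, so the filling exists and is unique.

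Plugging (i) and (ii) into Young's rule immediately yields the claimed decomposition, and the fact that every Kostka number equals $1$ gives multiplicity-freeness. I do not anticipate any real obstacle here; the mildest care is needed in observing that the $j=2$ dominance condition instantly collapses $\mu$ to at most two parts, and in checking the nonnegativity of the counts appearing in the Kostka filling.
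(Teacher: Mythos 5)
Your proof is correct and follows essentially the same route as the paper: invoke Young's rule, observe that the dominating partitions are exactly $(n-l,l)$ for $0\le l\le k$, and show each Kostka number is $1$. The only cosmetic difference is in how you argue uniqueness of the Kostka filling (you force the length-two columns, while the paper forces the placement of the $1$'s in the first row), but these are two phrasings of the same observation.
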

\begin{proof}
By Young's rule (\ref{Young's Rule}), we are only concerned with $S^\mu$ such that $\lambda\unlhd\mu$. Since $\lambda=(n-k,k)$, we see that $\mu$ must have the form $(n-l,l)$ for $0\leq l\leq k$. The multiplicity of $S^\mu$ is given by $\kappa_{\mu,\lambda}$, which is the number of Kostka fillings of $\mu=(n-l,l)$ with $(n-k)$ 1's and $k$ 2's. To be a Kostka filling, all $(n-k)$ 1's must be in the first row of $\mu$, completely determining the filling. Thus $\kappa_{\mu,\lambda}=1$.
\end{proof}

Together with Theorem~\ref{cobbound} and Theorem~\ref{orbits}, the simple decomposition of $\mathbb{C}X^{(n-k,k)}$ into irreducible modules in Lemma~\ref{simpleyoung} gives rise to an efficient approach to doing a change of basis from the standard basis $\mathcal{B}_1$ to a harmonic basis $\mathcal{B}_n$, especially when compared to the naive bound of $\omega(C(\mathcal{B}_n, \mathcal{B}_1)) < 2 \binom{n}{k}^2$. 

\begin{theorem}\label{Theorem: (n-k,k)} Suppose $\lambda=(n-k,k)$ is a partition of $n$, and that $\mathcal{R}(S_1),\dots,\mathcal{R}(S_n)$ are compatible with respect to $\mathbb{C}X^\lambda$. Let $\mathcal{B}_1,\dots,\mathcal{B}_n$ be harmonic bases of $\mathbb{C}X^\lambda$ where $\mathcal{B}_j$ is an orbital symmetry adapted basis with respect to $\mathcal{R}(S_1),\dots,\mathcal{R}(S_j)$, and $\mathcal{B}_1$ is the standard basis of $\mathbb{C}X^\lambda$. Then
\[
\omega(C(\mathcal{B}_n, \mathcal{B}_1)) < 4 (n-1) \binom{n}{k}. 
\]
\end{theorem}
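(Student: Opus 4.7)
My plan is to combine Corollary \ref{together} with the column-bound estimate in Theorem \ref{cobbound}, namely $\nu(C(\mathcal{B}_j, \mathcal{B}_{j-1})) \le K_{X^\lambda}(S_j, S_{j-1}) \cdot |X^\lambda|$. Since $|X^\lambda| = \binom{n}{k}$ and there are exactly $n-1$ consecutive changes of basis from $\mathcal{B}_1$ to $\mathcal{B}_n$, it suffices to establish $K_{X^\lambda}(S_j, S_{j-1}) \le 2$ for every $2 \le j \le n$. Summing the resulting bound $\omega(C(\mathcal{B}_j, \mathcal{B}_{j-1})) < 2 \nu(C(\mathcal{B}_j, \mathcal{B}_{j-1})) \le 4 \binom{n}{k}$ over $j$ then produces exactly $4(n-1)\binom{n}{k}$, with the factor of $4$ arising as $2 \cdot 2$ from the two estimates.

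The key combinatorial step is a description of the $S_j$-orbits on $X^\lambda$. Two tabloids of shape $\lambda = (n-k,k)$ lie in the same $S_j$-orbit if and only if they assign each element of $\{j+1, \ldots, n\}$ to the same row. Fixing such an assignment, with $a_1$ of these elements in row~1 and $a_2 = n - j - a_1$ in row~2, the remaining $j$ entries (the elements of $\{1, \ldots, j\}$) fill a sub-tabloid whose shape is the weak composition $\mu = (n-k-a_1,\, k-a_2)$ of $j$. This gives an $S_j$-equivariant bijection between the orbit $X_i$ and the tabloid set $X^\mu$, so $\mathbb{C}X_i \cong \mathbb{C}X^{\bar\mu}$ as $\mathbb{C}S_j$-modules, where $\bar\mu$ is a partition of $j$ with at most two parts.

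With this identification, the bound $K_{X_i}(S_j, S_{j-1}) \le 2$ follows directly from the machinery already in hand. Applying Theorem \ref{orbits} to the two-row partition $\bar\mu$ decomposes $\mathbb{C}X^{\bar\mu}$, viewed as a $\mathbb{C}S_{j-1}$-module, into at most two summands $\mathbb{C}X^{\bar\mu^{1}} \oplus \mathbb{C}X^{\bar\mu^{2}}$, each itself indexed by a partition of $j-1$ with at most two parts. Lemma \ref{simpleyoung} then guarantees that each such summand is multiplicity-free as a $\mathbb{C}S_{j-1}$-module, so every irreducible of $\mathbb{C}S_{j-1}$ appears in $\mathbb{C}X_i$ with total multiplicity at most $2$. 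Taking the maximum over orbits yields $K_{X^\lambda}(S_j, S_{j-1}) \le 2$, which completes the argument.

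The main obstacle I anticipate is the bookkeeping around degenerate orbits, that is, orbits with $a_1 \in \{0, n-k\}$ or $a_2 \in \{0, k\}$ where $\mu$ acquires a zero part, together with the reordering implicit in passing from the weak composition $\mu$ to its associated partition $\bar\mu$ before invoking Theorem \ref{orbits} and Lemma \ref{simpleyoung}. In those cases $\bar\mu$ has only one part, so Theorem \ref{orbits} produces a single trivial summand and one in fact gets $K_{X_i}(S_j, S_{j-1}) \le 1$; the bound of $2$ still holds, but one must verify these corner cases uniformly so that the recursion $\bar\mu \mapsto \bar\mu^{1}, \bar\mu^{2}$ behaves as advertised.
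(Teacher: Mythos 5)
Your proposal is correct and follows essentially the same route as the paper's own proof: identify each $S_j$-orbit with a two-row tabloid set $X^{\bar\mu}$, invoke Theorem~\ref{orbits} to split it into at most two summands under restriction to $S_{j-1}$, use Lemma~\ref{simpleyoung} to see each summand is multiplicity-free so that $K_{X^\lambda}(S_j,S_{j-1}) \le 2$, and then apply Theorem~\ref{cobbound} and sum over $j$. You simply spell out the orbit identification $X_i \cong X^{\bar\mu}$ and the corner cases more explicitly than the paper, which states the same argument tersely.
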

\begin{proof}
Let $2 \le j \le n$. The action of $S_j$ on $X^\lambda$ partitions $X^\lambda$ into orbits $X^{\lambda'}$ that correspond to tabloids with at most two rows and with entries from $\{ 1, \dots, j \}$. With the use of Theorem~\ref{orbits} and Lemma~\ref{simpleyoung}, we see that restricting the action further to $S_{j-1}$ on $X^{\lambda'}$ gives rise to a decomposition of the $\mathbb{C}X^{\lambda'}$ into irreducible $\mathbb{C}S_{j-1}$-modules each with multiplicity no more than two. It follows that
\[
K_{X^\lambda}(S_j, S_{j-1}) \le 2. 
\]
By Theorem~\ref{cobbound}, we have that 
\[
\nu(C(\mathcal{B}_j,\mathcal{B}_{j-1})) \le K_{X^\lambda}(S_j, S_{j-1}) |X^\lambda| \le 2 \binom{n}{k}.
\]
The theorem statement now follows from the fact that 
\[
\omega(C(\mathcal{B}_n, \mathcal{B}_1)) < \sum_{j=2}^n 2 \nu (C(\mathcal{B}_j, \mathcal{B}_{j-1})). 
\]
\end{proof}

\subsection*{Remark}
Theorem~\ref{Theorem: (n-k,k)} is essentially Theorem~2 in \cite{iglesias-natale-2017}. In that paper, the authors use Gelfand-Tsetlin bases for what we are calling harmonic bases, and the bound they provide is $2(n-1)\binom{n}{k}$. It differs from our bound of $4(n-1)\binom{n}{k}$ because the computational model they use counts a single complex multiplication and addition as one operation.

\subsection{The Case $\lambda=(n-k,1,\dots,1)$}

We now consider the situation where we are dealing with partitions of $n$ of the form $\lambda = (n-k, 1, \dots, 1)$. As noted in Section~\ref{Section:  Introduction}, tabloids of this shape arise when dealing with survey data for which respondents have been asked to rank their top $k$ items from a set of $n$ items. 

As we saw when $\lambda=(n-k,k)$, the two main steps in determining bounds on the number of nonzero entries in a full factorization of the change of basis matrix $C(\mathcal{B}_n,\mathcal{B}_1)$ involve decomposing $X^\lambda$ into orbits $X^{\lambda'}$ under the action of $S_j$, and then decomposing the resulting permutation modules $\mathbb{C}X^{\lambda'}$ into irreducible $\mathbb{C}S_{j-1}$-submodules. Bounds then come from the maximum dimensions of the $(S_{j},S_{j-1})$-orbital frequency spaces, which amounts to summing Kostka numbers across certain $S_{j-1}$-orbits.

 Though the Kostka numbers are well-studied, Stanley remarks in \cite{stanley-1999} that it is unlikely that a general formula for $\kappa_{\mu,\lambda}$ exists. Fortunately, when $\lambda=(n-k,k)$, the Kostka number $\kappa_{\mu,\lambda}$ is trivially 1 for each $\mu$ dominating $\lambda$. When $\lambda=(n-k,1,\dots,1)$, this is no longer the case. However, the orbits in this case still exhibit enough structure for us to find bounds for the multiplicities. We use the language of \emph{skew tableaux} to describe the orbits (see, for example, \cite{james-and-kerber-1981}).
 
 Let $\lambda=(\lambda_1,\dots,\lambda_k)$ and $\mu=(\mu_1,\dots, \mu_\ell)$ be partitions. We say $\mu$ \emph{contains} $\lambda$ if $k\leq \ell$ and $\lambda_i\leq \mu_i$ for all $i$ such that $1\leq i\leq k$. If $\mu$ contains $\lambda$, then the \emph{skew diagram of shape} $\mu/\lambda$ is the set of boxes in the Young diagram of shape $\mu$ that are not in the Young diagram of shape $\lambda$. If $n$ is the number of boxes in the skew diagram of shape $\mu/\lambda$, then a \emph{standard skew tableau of shape} $\mu/\lambda$ is a filling of the boxes of the skew diagram with the numbers $1,\dots,n$, without repetition, so that the entries in each row and each column are strictly increasing (see Figure~\ref{fig: skewtab}). 

\begin{figure}
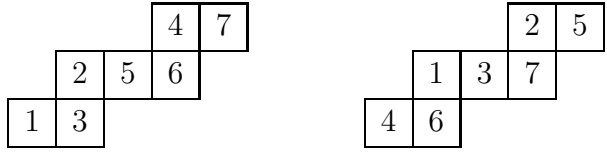

\begin{ytableau}
\none & \none & \none & 4 & 7 \\
\none & 2 & 5 & 6 \\
1 & 3 \\
\end{ytableau}
\hspace{.5in}
\begin{ytableau}
\none & \none & \none & 2 & 5 \\
\none & 1 & 3 & 7 \\
4 & 6 \\
\end{ytableau}
\caption{Two standard skew tableaux of shape $\mu/\lambda$ where $\mu  = (5,4,2)$ and $\lambda = (3,1)$.}
\label{fig: skewtab}
\end{figure}
  
Let $d_{\mu/\lambda}$ denote the number of standard skew tableau of shape $\mu/\lambda$. For convenience, if $\mu$ does not contain $\lambda$, then we will set $d_{\mu/\lambda} = 0$. For positive integers $k$ and $n$ such that $k \le n$, let $(n)_k$ denote the falling factorial
\[
(n)_k=n(n-1)(n-2)\cdots (n-(k-1)).
\]
Note that if $\lambda = (n-k,1,\dots,1)$ is a partition of $n$, then $\dim \mathbb{C}X^\lambda = (n)_k$.  

For nonnegative integers $\ell$ and $r$ such that $\ell\geq r$, let $D(\ell,r)=\{\mu\mid \mu\vdash \ell, \mu_1\geq r\}$.
Let \[M(n,k)=\max_{\mu\in D(n-1,n-k-1)}\left\{d_{\mu/(n-k-1)}+kd_{\mu/(n-k)}\right\}.\]
Note that if $\mu$ has first part $\mu_1=n-k-1$, the sum reduces to $d_{\mu/(n-k-1)}$.

As $\mu/(n-k-1)$  represents the skew diagram resulting from removing $n-k-1$ of the boxes in the first row of $\mu$, and similarly for $\mu/(n-k)$, for large enough $n$, the value of $M(n,k)$ depends only on $k$. For example, $M(7,6)=112$, $M(8,6)=155$, and $M(n,6)=160$ for all $n>8$. Let $N(k)$ denote the maximum value of $M(n,k)$ over all positive integers $n$ such that $n>k$.

\begin{theorem}\label{secondexthm} Let $\lambda=(n-k,1,\dots,1)\vdash n$ and let $\mathcal{R}(S_1),\dots,\mathcal{R}(S_n)$ be  compatible with respect to $\mathbb{C}X^\lambda$. Let $\mathcal{B}_1,\dots,\mathcal{B}_n$ be harmonic bases of $\mathbb{C}X^\lambda$ where $\mathcal{B}_j$ is an orbital symmetry adapted basis with respect to $\mathcal{R}(S_1),\dots,\mathcal{R}(S_j)$, and $\mathcal{B}_1$ is the standard basis of $\mathbb{C}X^\lambda$. Then
\[
\omega(C(\mathcal{B}_n, \mathcal{B}_1)) < 2N(k)(n-1)(n)_k 
\]
 as opposed to the naive bound of $2((n)_{k})^2$. 
\end{theorem}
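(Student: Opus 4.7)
The plan is to follow the same strategy as the proof of Theorem~\ref{Theorem: (n-k,k)}: establish $K_{X^\lambda}(S_j, S_{j-1}) \le N(k)$ for every $2 \le j \le n$, and then combine this with Theorem~\ref{cobbound} and Corollary~\ref{together}. Granting such an inequality, since $|X^\lambda| = (n)_k$, Theorem~\ref{cobbound} immediately yields $\nu(C(\mathcal{B}_j, \mathcal{B}_{j-1})) \le N(k)(n)_k$, and summing over $j$ via Corollary~\ref{together} gives $\omega(C(\mathcal{B}_n, \mathcal{B}_1)) < 2(n-1)N(k)(n)_k$, as desired.

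To bound $K_{X^\lambda}(S_j, S_{j-1})$, I would first describe the $S_j$-orbits on $X^\lambda$. These orbits correspond to placements of the elements $\{j+1, \ldots, n\}$ within a tabloid of shape $\lambda = (n-k, 1^k)$, and an orbit in which exactly $r$ of those elements sit in singleton rows is a $\mathbb{C}S_j$-submodule isomorphic to $\mathbb{C}X^{\lambda'}$ where $\lambda' = (m, 1^s)$ with $m = j-k+r$ and $s = k-r$; in particular $0 \le s \le k$ in every orbit. Applying Theorem~\ref{orbits} to $\lambda'$ then gives
\[
\mathbb{C}X^{\lambda'}\big|_{S_{j-1}} \cong \mathbb{C}X^{(m-1, 1^s)} \oplus s \cdot \mathbb{C}X^{(m, 1^{s-1})}.
\]
Combined with Young's rule (\ref{Young's Rule}) and the observation that $\kappa_{\mu, (a, 1^b)} = d_{\mu/(a)}$ (since in any Kostka filling the $a$ ones must occupy the leftmost $a$ cells of row~$1$, leaving the remaining cells, holding the distinct values $2, \ldots, b+1$, as a standard skew tableau of shape $\mu/(a)$), the multiplicity of $S^\mu$ in $\mathbb{C}X^{\lambda'}|_{S_{j-1}}$ is exactly $d_{\mu/(m-1)} + s \cdot d_{\mu/(m)}$ for each $\mu \vdash j-1$.

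The crux is to bound this quantity by $N(k)$. Given $\mu \vdash j-1$ with $\mu_1 \ge m-1$, I would pad the first row to form $\mu' = (\mu_1 + k - s,\, \mu_2, \ldots, \mu_\ell) \vdash m + k - 1$, which lies in $D(m+k-1, m-1)$. A short monotonicity lemma shows that appending $k-s$ cells to the right of row~$1$ of $\mu$ only increases the relevant skew-SYT counts: any SYT of $\mu/(m-1)$ extends to one of $\mu'/(m-1)$ by placing the additional values $s+1, \ldots, k$ in order in the new cells, and likewise for $\mu/(m)$ and $\mu'/(m)$. Combined with $s \le k$, this yields
\[
d_{\mu/(m-1)} + s \cdot d_{\mu/(m)} \;\le\; d_{\mu'/(m-1)} + k \cdot d_{\mu'/(m)} \;\le\; M(m+k,\, k) \;\le\; N(k),
\]
as required. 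The degenerate case $m = 0$ (where $\lambda'$ collapses to the column shape $(1^j)$ and the free part acts as the regular representation of $S_j$) can be treated by essentially the same padding, replacing $\mu'$ with $(\mu_1 + k - s + 1,\, \mu_2, \ldots) \vdash k$ so that the bound reduces to $M(k+1, k) \le N(k)$.

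The main obstacle is the combinatorial monotonicity step: one must carefully verify that the row-$1$ padding preserves or increases the skew-SYT counts appearing in both $d_{\mu/(m-1)}$ and $d_{\mu/(m)}$ simultaneously, and (in the $m = 0$ case) that the chosen $\mu'$ still lies in the appropriate domain $D(n'-1, n'-k-1)$. Once this combinatorial step is in hand, the rest of the argument is a mechanical application of the previously established machinery.
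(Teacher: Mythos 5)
Your proof follows the same strategy as the paper's: decompose $X^\lambda$ into $S_j$-orbits of shape $(m,1^s)$, restrict via Theorem~\ref{orbits}, read off multiplicities as skew-SYT counts $d_{\mu/(m-1)} + s\,d_{\mu/(m)}$ via the Kostka identity, and bound these by a row-one padding argument that lands in some $M(\cdot,k) \le N(k)$. The only real difference is the padding target: you add $k-s$ cells to row one to reach $D(m+k-1,\,m-1)$ and bound by $M(m+k,k)$, whereas the paper adds $n-j$ cells to reach $D(n-1,\,n-k-1)$ and bounds by $M(n,k)$ — the paper's choice is marginally cleaner since $n>k$ is already given and it sidesteps your separate treatment of the all-singleton orbit $\lambda' = (1^j)$ (which in any case folds into the generic argument if you write it as $(1,1^{j-1})$ with $m=1$ rather than $m=0$, since $d_{\mu/(0)} = d_{\mu/(1)} = d_\mu$).
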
  
In Table \ref{table}, we compare the bound of Theorem \ref{secondexthm} to the naive bound.
\begin{proof}
Let $\lambda=(n-k,1,\dots,1)\vdash n$. By Theorem \ref{orbits}, as a $\mathbb{C}S_{n-1}$-module, 
\[\mathbb{C}X^\lambda\cong\bigoplus_{i=1}^{k+1} \mathbb{C}X^{\lambda^i}\cong \mathbb{C}X^{\lambda^1}\oplus k\mathbb{C}X^{\lambda^2}\]
where $\lambda^1=(n-k-1,1,\dots,1)\vdash (n-1)$ and $\lambda^2=\lambda^3=\dots=\lambda^{k+1}=(n-k,1,\dots,1)\vdash (n-1)$. Using Young's rule (\ref{Young's Rule}) to decompose $\mathbb{C}X^{\lambda^1}$ as a $\mathbb{C}S_{n-1}$-module we first note that the set of partitions of $n-1$ that dominate $\lambda^1$ is given by $D(n-1,n-k-1).$

Suppose $\mu  \in D(n-1,n-k-1)$. To determine the multiplicity of $S^\mu$ in the decomposition of $\mathbb{C}X^{\lambda^1}$ as a $\mathbb{C}S_{n-1}$-module, we need to determine $\kappa_{\mu,\lambda^1}$. This is the number of Kostka fillings of $\mu$ with  $(n-k-1)$ 1's, along with the entries $2,\dots, k+1$. In order to be a valid Kostka filling, this means that the first $n-k-1$ entries of the first row of $\mu$ must be a 1, leaving exactly one of each number $2,\dots,k+1$ to fill the remaining $k$ boxes of $\mu$. This is exactly the number of standard skew tableaux of shape $\mu/(n-k-1)$. Thus, as a $\mathbb{C}S_{n-1}$-module, 
\[
\mathbb{C}X^{\lambda^1}\cong\bigoplus_{\mu\in D(n-1,n-k-1)} d_{\mu/(n-k-1)} S^{\mu}.
\]

As $\lambda^2=(n-k,1,\dots,1)$, with $(k-1)$ 1's we see by the same argument that each of the $k$ copies of $\mathbb{C}X^{\lambda^2}$ decomposes as a $\mathbb{C}S_{n-1}$-module as
\[
\mathbb{C}X^{\lambda^2}\cong \bigoplus_{\mu\in D(n-1,n-k)} d_{\mu/(n-k)} S^{\mu}.
\]

Then in the complete decomposition of $\mathbb{C}X^\lambda$ into irreducible $\mathbb{C}S_{n-1}$-modules, only those $S^\mu$ with $\mu\in D(n-1,n-k-1)$ will appear, and they appear with multiplicity 
\[d_{\mu/(n-k-1)}+kd_{\mu/(n-k)}.\]
Note that if $\mu_1=(n-k-1)$, this sum reduces to $d_{\mu/(n-k-1)}$, as $d_{\mu/(n-k)}=0$. By Theorem~\ref{cobbound}, the number of nonzero entries in each column of the change of basis matrix $C(\mathcal{B}_{n},\mathcal{B}_{n-1})$ is bounded by $K_{X^\lambda}(S_n,S_{n-1})= M(n,k)$.

Now consider the change of basis matrix $\mathcal{C}(\mathcal{B}_{j},\mathcal{B}_{j-1})$. By repeated applications of Theorem \ref{orbits}, as a $\mathbb{C}S_j$-module,
\[
\mathbb{C}X^\lambda\cong\bigoplus\mathbb{C}X^{\lambda'}
\]
where the $\lambda'$ are a collection of (possibly repeated) partitions of $j$, each of which has form $\lambda'=(j-m,1,\dots,1)$ for some nonnegative integer $m$, where $k-(n-j)\leq m\leq k$.

By Theorem~\ref{cobbound}, we need only determine
\[
K_{X^\lambda}(S_j,S_{j-1})= \max_{\lambda'}K_{X^{\lambda'}}(S_j,S_{j-1}). 
\]
Applying the same arguments as above to decompose $\mathbb{C}X^{\lambda'}$ into irreducible $\mathbb{C}S_{j-1}$-submodules using Theorem~\ref{orbits} and Young's rule~(\ref{Young's Rule}), we see that the number of nonzero entries in each column of the change of basis matrix $C(\mathcal{B}_{j},\mathcal{B}_{j-1})$ is bounded by $M(j,m)$, where
\[
M(j,m)=\max_{\mu\in D(j-1,j-m-1)}\{d_{\mu/(j-m-1)}+md_{\mu/(j-m)}\}.
\]

Suppose $\mu=(\mu_1,\dots,\mu_s)\in D(j-1,j-m-1)$. Then $\mu\vdash j-1$ and $\mu_1=j-m-1+t$ for some $t$ such that $0\leq t\leq m$. The skew diagram $\mu/(j-m-1)$ has the same shape as $\mu$ for all but its first row and has $t$ boxes in its first row.

Let $\mu'=(\mu_1+n-j,\mu_2,\dots,\mu_s)$. Then $\mu'\vdash n-1$ and $\mu_1'=n-m-1+t\geq n-k-1+t$, so $\mu'\in D(n-1,n-k-1)$. The skew diagram $\mu'/(n-k-1)$ has the same shape as $\mu$ for all but its first row and has $t+k-m\geq t$ boxes its first row. Moreover, as $n-k-1\geq j-m-1$, the first box in this row is either in the same position or to the right of the first box in the first row of $\mu/(j-m-1)$.

A standard skew tableau of shape $\mu/(j-m-1)$ yields a standard skew tableau of shape $\mu'/(n-k-1)$ as follows: fill the boxes in rows $\mu_2,\dots, \mu_s$ the same way as in $\mu/(j-m-1)$ and fill the first $t$ boxes of the first row of $\mu'/(n-k-1)$ as they are filled in the first row of $\mu/(j-m-1)$. If boxes remain in the first row of $\mu'/(n-k-1)$, fill them with the numbers that remain, in ascending order from left to right.

This implies that there are at least as many standard skew tableau of shape  $\mu'/(n-k-1)$ as there are of shape $\mu/(j-m-1)$. Thus, $ d_{\mu/(j-m-1)}\leq d_{\mu'/(n-k-1)}$. Similarly, $ d_{\mu/(j-m)}\leq d_{\mu'/(n-k)}$ and so
\[
d_{\mu/(j-m-1)}+md_{\mu/(j-m)}\leq d_{\mu'/(n-k-1)}+kd_{\mu'/(n-k)}
\]
implying that $M(j,m)\leq M(n,k)\leq N(k)$. By Theorem~\ref{cobbound}, we have that 
\[
\nu(C(\mathcal{B}_j,\mathcal{B}_{j-1})) \le K_{X^\lambda}(S_j, S_{j-1}) |X^\lambda| \leq N(k) (n)_k.
\]
The theorem statement now follows from the fact that 
\[
\omega(C(\mathcal{B}_n, \mathcal{B}_1)) < \sum_{j=2}^n 2 \nu (C(\mathcal{B}_j, \mathcal{B}_{j-1})). 
\]
\end{proof}

\begin{table}
\begin{center}
\begin{tabular}{l|l|l}
$k$&$N(k)(n-1)(n)_k$ \\\hline
2&4$(n-1)(n)_2$&$((n)_2)^2$\\\hline 
3&9$(n-1)(n)_3$&$((n)_3)^2$\\\hline
4&18$(n-1)(n)_4$&$((n)_4)^2$\\\hline
5&60$(n-1)(n)_5$&$((n)_5)^2$\\\hline
6&160$(n-1)(n)_6$&$((n)_6)^2$\\\hline
7&420$(n-1)(n)_7$&$((n)_7)^2$\\\hline
8&1344$(n-1)(n)_8$&$((n)_8)^2$\\\hline
9&5376$(n-1)(n)_9$&$((n)_9)^2$\\\hline
10&16800$(n-1)(n)_{10}$&$((n)_{10})^2$\\\hline
11&59400$(n-1)(n)_{11}$&$((n)_{11})^2$\\\hline
12&222750$(n-1)(n)_{12}$&$((n)_{12})^2$\\\hline
13&878592$(n-1)(n)_{13}$&$((n)_{13})^2$\\\hline
\end{tabular}
\caption{Comparison of bound given by Theorem \ref{secondexthm} with naive bound}\label{table}
\end{center}
\end{table}


\section{Conclusion and Open Questions}
In this paper, we have developed a framework for computing the coefficients of $f\in\mathbb{C}X$ in terms of a harmonic basis $\mathcal{B}$ of $\mathbb{C}X$ using intermediate bases $\mathcal{B}_1,\dots,\mathcal{B}_n$ and iteratively computing a change of basis from $\mathcal{B}_{j-1}$ to $\mathcal{B}_{j}$. In Section \ref{Section:  General Upper Bounds}, we saw that when $\mathcal{B}_1$ is the standard basis and $\mathcal{B}_j$ is an orbital symmetry adapted basis with respect to compatible representations $\mathcal{R}(G_1),\dots, \mathcal{R}(G_n)$, we can bound the number of nonzero entries in the change of basis matrix $C(\mathcal{B}_j,\mathcal{B}_{j-1})$ in terms of the multiplicities of the irreducible $\mathbb{C}G_{j-1}$-submodules in the orbital decomposition of $\mathbb{C}X$ under the action of $G_j$. 

In Section \ref{Section:  The Symmetric Group Acting on Tabloids}, we applied these results to permutation modules that arise when the symmetric group acts on a set of tabloids $X^\lambda$. In particular, for both $\lambda=(n-k,k) \vdash n$ and $\lambda=(n-k,1,\dots,1)\vdash n$, we provided a coarse bound based only on bounding the largest number of nonzero entries per column in $C(\mathcal{B}_{n},\mathcal{B}_{n-1}$). Despite the coarseness of these bounds, they greatly improve on the naive bound, which is $2(\dim(\mathbb{C}X^\lambda))^2$ computations (see Table \ref{table}).

In truth, both results could be refined further by bounding the number of nonzero entries in each column of each change of basis matrix, rather than focusing on the maximum possible number of nonzero entries in a column over all change of basis matrices $C(\mathcal{B}_j,\mathcal{B}_{j-1})$. Indeed, the proof of Theorem \ref{secondexthm} leads to a more refined bound than the theorem statement itself. The proof provides a bound for the number of nonzero entries in each individual column of $C(\mathcal{B}_n,\mathcal{B}_{n-1})$. Summing over the columns leads to a bound of at most 
\[
\sum_{\mu\in D(n-1,n-k-1)} (d_{\mu/(n-k-1)}+kd_{\mu/(n-k)})\dim(S^\mu)
\]
nonzero entries in $C(\mathcal{B}_n,\mathcal{B}_{n-1})$.  Continuing in this manner yields similar looking bounds for the number of nonzero entries in each change of basis matrix $C(\mathcal{B}_{j},\mathcal{B}_{j-1})$. Preliminary numerical results suggest that these more refined bounds are worth investigating. Indeed,  for small values of $n$ and $k$ they differ by only about a factor of 2 from the bound given in the recent algorithm of Clausen and H\"{u}hne \cite{clausen-and-huhne-2017}, which uses detailed knowledge about the appearance of repeated entries in the corresponding representation matrices. 

It is also worth noting that the results of Section \ref{Section:  General Upper Bounds} apply for any set of harmonic bases $\mathcal{B}_1,\dots,\mathcal{B}_n$ where $\mathcal{B}_1$ is the standard basis and $\mathcal{B}_j$ is an orbital symmetry adapted basis with respect to a  compatible collection $\mathcal{R}(G_1),\dots,\mathcal{R}(G_j)$ of irreducible representations. Is there a particular choice for these bases that can lead to a more efficient change of basis computation? Algorithms exist for constructing symmetry adapted bases (see, for example, \cite{fassler-and-stiefel-1992}) but at several key steps in these algorithms there is a degree of choice. As some choices could lead to more efficient bounds than others, it would be interesting to refine our bounds further by streamlining these basis constructions from the perspective of creating sparse change of basis matrices.


\bibliography{fast-algorithms-bib}{}

\begin{thebibliography}{10}

\bibitem{burgisser-etal-1997}
Peter B{\"u}rgisser, Michael Clausen, and M.~Amin Shokrollahi.
\newblock {\em Algebraic complexity theory}, volume 315 of {\em Grundlehren der
  Mathematischen Wissenschaften [Fundamental Principles of Mathematical
  Sciences]}.
\newblock Springer-Verlag, Berlin, 1997.
\newblock With the collaboration of Thomas Lickteig.

\bibitem{ceccherini-silberstein-etal-2008}
Tullio Ceccherini-Silberstein, Fabio Scarabotti, and Filippo Tolli.
\newblock {\em Harmonic analysis on finite groups}, volume 108 of {\em
  Cambridge Studies in Advanced Mathematics}.
\newblock Cambridge University Press, Cambridge, 2008.
\newblock Representation theory, Gelfand pairs and Markov chains.

\bibitem{clausen-1989}
Michael Clausen.
\newblock Fast generalized {F}ourier transforms.
\newblock {\em Theoret. Comput. Sci.}, 67(1):55--63, 1989.

\bibitem{clausen-and-baum-1993}
Michael Clausen and Ulrich Baum.
\newblock {\em Fast {F}ourier transforms}.
\newblock Bibliographisches Institut, Mannheim, 1993.

\bibitem{clausen-and-huhne-2017}
Michael Clausen and Paul H\"{u}hne.
\newblock Linear time {F}ourier transforms of {$S_{n-k}$}-invariant functions
  on the symmetric group {$S_n$}.
\newblock In {\em I{SSAC}'17---{P}roceedings of the 2017 {ACM} {I}nternational
  {S}ymposium on {S}ymbolic and {A}lgebraic {C}omputation}, pages 101--108.
  ACM, New York, 2017.

\bibitem{crisman-orrison-2017}
Karl-Dieter Crisman and Michael~E. Orrison.
\newblock Representation theory of the symmetric group in voting theory and
  game theory.
\newblock In {\em Algebraic and geometric methods in discrete mathematics},
  volume 685 of {\em Contemp. Math.}, pages 97--115. Amer. Math. Soc.,
  Providence, RI, 2017.

\bibitem{daugherty-etal-2009}
Zajj Daugherty, Alexander~K. Eustis, Gregory Minton, and Michael~E. Orrison.
\newblock Voting, the symmetric group, and representation theory.
\newblock {\em Amer. Math. Monthly}, 116(8):667--687, 2009.

\bibitem{diaconis-1988}
Persi Diaconis.
\newblock {\em Group representations in probability and statistics}.
\newblock Institute of Mathematical Statistics Lecture Notes---Monograph
  Series, 11. Institute of Mathematical Statistics, Hayward, CA, 1988.

\bibitem{diaconis-1989}
Persi Diaconis.
\newblock A generalization of spectral analysis with application to ranked
  data.
\newblock {\em Ann. Statist.}, 17(3):949--979, 1989.

\bibitem{drozd-kirichenko-1994}
Yurij~A. Drozd and Vladimir~V. Kirichenko.
\newblock {\em Finite-dimensional algebras}.
\newblock Springer-Verlag, Berlin, 1994.
\newblock Translated from the 1980 Russian original and with an appendix by
  Vlastimil Dlab.

\bibitem{fassler-and-stiefel-1992}
A.~F{\"a}ssler and E.~Stiefel.
\newblock {\em Group theoretical methods and their applications}.
\newblock Birkh\"auser Boston, Inc., Boston, MA, 1992.
\newblock Translated from the German by Baoswan Dzung Wong.

\bibitem{iglesias-natale-2017}
Rodrigo Iglesias and Mauro Natale.
\newblock Complexity of the {F}ourier transform on the {J}ohnson graph.
\newblock {\em arXiv e-prints}, page arXiv:1704.06299, 2017.

\bibitem{james-and-kerber-1981}
Gordon James and Adalbert Kerber.
\newblock {\em The representation theory of the symmetric group}, volume~16 of
  {\em Encyclopedia of Mathematics and its Applications}.
\newblock Addison-Wesley Publishing Co., Reading, Mass., 1981.
\newblock With a foreword by P. M. Cohn, With an introduction by Gilbert de B.
  Robinson.

\bibitem{marden-1995}
John~I. Marden.
\newblock {\em Analyzing and modeling rank data}, volume~64 of {\em Monographs
  on Statistics and Applied Probability}.
\newblock Chapman \& Hall, London, 1995.

\bibitem{maslen-rockmore-wolff-2018}
David Maslen, Daniel~N. Rockmore, and Sarah Wolff.
\newblock Separation of variables and the computation of {F}ourier transforms
  on finite groups, {II}.
\newblock {\em J. Fourier Anal. Appl.}, 24(1):226--284, 2018.

\bibitem{maslen-rockmore-1997}
David~K. Maslen and Daniel~N. Rockmore.
\newblock Separation of variables and the computation of {F}ourier transforms
  on finite groups. {I}.
\newblock {\em J. Amer. Math. Soc.}, 10(1):169--214, 1997.

\bibitem{sagan-2001}
Bruce~E. Sagan.
\newblock {\em The symmetric group}, volume 203 of {\em Graduate Texts in
  Mathematics}.
\newblock Springer-Verlag, New York, second edition, 2001.
\newblock Representations, combinatorial algorithms, and symmetric functions.

\bibitem{serre-1977}
Jean-Pierre Serre.
\newblock {\em Linear representations of finite groups}.
\newblock Springer-Verlag, New York-Heidelberg, 1977.
\newblock Translated from the second French edition by Leonard L. Scott,
  Graduate Texts in Mathematics, Vol. 42.

\bibitem{stanley-1999}
Richard~P. Stanley.
\newblock {\em Enumerative combinatorics. {V}ol. 2}, volume~62 of {\em
  Cambridge Studies in Advanced Mathematics}.
\newblock Cambridge University Press, Cambridge, 1999.
\newblock With a foreword by Gian-Carlo Rota and appendix 1 by Sergey Fomin.

\bibitem{terras-1999}
Audrey Terras.
\newblock {\em Fourier analysis on finite groups and applications}, volume~43
  of {\em London Mathematical Society Student Texts}.
\newblock Cambridge University Press, Cambridge, 1999.

\bibitem{van-loan-1992}
Charles Van~Loan.
\newblock {\em Computational frameworks for the fast {F}ourier transform},
  volume~10 of {\em Frontiers in Applied Mathematics}.
\newblock Society for Industrial and Applied Mathematics (SIAM), Philadelphia,
  PA, 1992.

\end{thebibliography}
\bibliographystyle{plain}

\end{document}